\renewcommand\a{\alpha}
\renewcommand\b{\beta}
\newcommand\g{\gamma}
\renewcommand\d{\delta}
\newcommand\la{\lambda}
\newcommand\z{\zeta}
\renewcommand\th{\theta}
\newcommand\s{\sigma}
\newcommand\x{\chi}
\newcommand\vf{\varphi}
\renewcommand\r{\rho}
\newcommand\w{\omega}
\newcommand\vG{\varGamma}
\newcommand\ve{\varepsilon}
\newcommand\Ql{\bar{\mathbf Q}_l}
\newcommand\BP{\mathbf P}
\newcommand\BF{\mathbf F}
\newcommand\BZ{\mathbf Z}
\newcommand\Bk{\mathbf k}
\newcommand\CQ{\mathcal{Q}}
\newcommand\SE{\mathscr{E}}
\newcommand\SF{\mathscr{F}}
\newcommand\SM{\mathscr{M}}
\newcommand\SN{\mathscr{N}} 
\newcommand\SP{\mathscr{P}}
\newcommand\SH{\mathscr{H}}
\newcommand\SW{\mathscr{W}}
\newcommand\Fg{\mathfrak g}
\newcommand\iv{^{-1}}
\newcommand\wh{\widehat}
\newcommand\wt{\widetilde}
\newcommand\wg{^{\wedge}}
\newcommand\ol{\overline}
\newcommand\lra{\leftrightarrow}
\newcommand\IC{\operatorname{IC}}
\newcommand\Ker{\operatorname{Ker}}
\newcommand\Hom{\operatorname{Hom}}
\newcommand\End{\operatorname{End}}
\newcommand\Lie{\operatorname{Lie}}
\newcommand\Tr{\operatorname{Tr}\,}
\newcommand\ch{\operatorname{ch}}
\newcommand\uni{_{\operatorname{uni}}}
\newcommand\lp{\operatorname{\!\langle\!}}
\newcommand\rp{\operatorname{\!\rangle\!}}
\newcommand\Spin{\operatorname{Spin}}
\newcommand\dw{\dot w}
\newcommand{\isom}{\,\raise2pt\hbox{$\underrightarrow{\sim}$}\,}
\numberwithin{equation}{section}
\newtheorem{thm}{Theorem}[section]
\newtheorem{lem}[thm]{Lemma}
\newtheorem{prop}[thm]{Proposition}
\def \para#1{\par\medskip\textbf{#1}
              \addtocounter{thm}{1}}
\def \remark#1{\par\medskip\noindent
                \textbf{Remark #1}
                \addtocounter{thm}{1}}
\begin{document}
\setlength{\baselineskip}{4.9mm}
\setlength{\abovedisplayskip}{4.5mm}
\setlength{\belowdisplayskip}{4.5mm}
\renewcommand{\theenumi}{\roman{enumi}}
\renewcommand{\labelenumi}{(\theenumi)}
\renewcommand{\thefootnote}{\fnsymbol{footnote}}
\renewcommand{\thefootnote}{\fnsymbol{footnote}}
\parindent=20pt
\medskip
\begin{center}
{\bf Generalized Green functions and unipotent classes \\ 
     for finite reductive groups, III} 
\par
\vspace{1cm}
Toshiaki Shoji
\\
\vspace{0.7cm}
\title{}
\end{center}

\begin{abstract} 
Lusztig's algorithm of computing generalized Green functions 
of reductive groups involves an ambiguity on certain scalars.
In this paper, for reductive groups of classical type with 
arbitrary characteristic, we determine those scalars explicitly, 
and eliminate the ambiguity.  Our results imply that all 
the generalized Green functions of classical type are 
computable.    
\end{abstract}

\maketitle
\pagestyle{myheadings}
\markboth{SHOJI}{GENERALIZED GREEN FUNCTIONS, III}

\bigskip
\medskip

\begin{center}
{\sc Introduction}
\end{center}
\par
\medskip\noindent
This paper is a continuation of [S3, I, II]. 
Let $G$ be a connected reductive group defined over a finite field
$\BF_q$ with Frobenius map $F : G \to G$, and $G\uni$ the unipotent
variety of $G$. Let $\SN_G$ be the set of pairs $(C, \SE)$, where 
$C$ is a unipotent class in $G$, and $\SE$ is a $G$-equivariant simple local 
system on $C$.  Let $L$ be a Levi subgroup of a parabolic subgroup of $G$, 
and $(C_0, \SE_0) \in \SN_L$ be a cuspidal pair on $L$. We assume that 
$L$ is $F$-stable, $C$ is $F$-stable, and $F^*\SE_0 \simeq \SE_0$. 
For a fixed isomorphism $\vf_0: F^*\SE_0 \isom \SE_0$, a generalized Green 
function $Q^G_{L, C_0, \SE_0, \vf_0}$ was defined in [L2, II], which is a $G^F$-invariant
function on $G^F\uni$ with values in $\Ql$.  
Generalized Green functions play a crucial role in 
the representation theory of a finite reductive group $G^F$, 
and it is very important to compute them explicitly since the computation of
irreducible characters of $G^F$ is essentially reduced to the computation 
of those generalized Green functions thanks to the theory of character sheaves [L2].  
\par
Let $\SN_G^F$ be the set of $(C, \SE) \in \SN_G$ such that $C$ is $F$-stable and 
$F^*\SE \simeq \SE$.  
For any $j = (C, \SE) \in \SN^F_G$, we choose an isomorphism $\psi: F^*\SE \isom \SE$.
Note that $\psi$ is unique up to scalar since $\SE$ is simple.  
Let $Y_j$ be the characteristic function 
on $C^F$ defined by $\psi$, and we regard it as a function on $G\uni^F$ by defining 0
outside of $C^F$.   
Then $\{ Y_j \mid j \in \SN_G^F\}$ gives a basis of the $\Ql$-space of $G^F$-invariant 
functions on $G\uni^F$. In [L2, V], Lusztig gave an algorithm of expressing 
$Q^G_{L,C_0, \SE_0, \vf_0}$ as an explicit linear combination of the basis 
$\{ Y_j \mid j \in \SN_G^F\}$, where $\psi =\psi_j: F^*\SE \isom \SE$ is determined from 
the choice of $\vf_0: F^*\SE_0 \isom \SE_0$. (Here $\vf_0$ is chosen under a certain 
mild condition (see 1.2), and $\psi_j$ is determined uniquely from $\vf_0$ (see 1.3).
But note that $\psi_j$ is not directly computable.)
Thus the computation of the 
generalized Green function is reduced to the computation of those $Y_j$. 
Here the function $Y_j$ has a simple structure, up to scalar. 
If we choose a standard isomorphism $\psi_0 : F^*\SE \isom \SE$, the characteristic
function $Y_j^0$ on $C^F$ can be computed explicitly.  
There exists a scalar $\g_j \in \Ql^*$ such that $Y_j = \g_j Y_j^0$ for each $j$.
Thus the determination of generalized Green functions is reduced to the determination 
of those scalars $\g_j$ for each $j \in \SN_G^F$.
This is a non-trivial problem 
\par
The problem of determining $\g_j$ is closely related to 
the problem of finding a $G^F$-class in $C^F$
(note that $C^F$ has finitely many $G^F$-conjugacy classes) which 
has a good property with respect to the Frobenius action on a certain complex 
associated to $(L, C_0,\SE_0)$.      
A priori such a good $G^F$- class in $C^F$ depends on the choice of 
$(L, C_0, \SE_0)$ since $\psi_j$ is determined from $\vf_0: F^*\SE_0 \isom \SE_0$. 
For example, in the case where $L = T$ is a maximal torus, 
$C_0 = \{ 1\}$, and $\SE_0 = \Ql$ is the constant
sheaf, $Q^G_{L, C_0,\SE_0, \vf_0}$ 
is nothing but the original Green function $Q^G_T$.  In that case, 
assuming that $\ch \BF_q = p$ is a good prime, such a good $G^F$-class was
found in [S1], [S2], [BS], called a split class in $C^F$ (essentially, 
it is determined as a $G^F$-class for the adjoint group $G$), 
and the Green functions for exceptional groups were computed explicitly.
\par
Those scalars $\g_j$ were computed explicitly in [S3, I] in the case where 
$G = SL_n$ and $F$ is of split type (for any $p$), $F$ is of non-split type 
($p$ : large enough), and in [S3, II] in the case where $G = Sp_N$ or $SO_N$ 
for any $p$.      
The main discovery there is that the split class for 
$(T, \{ 1\}, \Ql)$ behaves well for any choice of $(L, C_0, \SE_0)$,
namely, there exists a $G^F$-class in $C^F$ which has a good property for 
any choice of $(L, C_0, \SE_0)$, and the image of this class to the adjoint 
group coincides with the split class for $(T, \{ 1\}, \Ql)$ mentioned above.   
Such a class is called a split class in $C^F$.  
\par
Since the split class is conjecturally compatible with the isogeny, 
the existence of the split class is reduced to the case of 
simply-connected groups.
$Sp_N$ or $SL_n$ are simply-connected, but $SO_N$ is not. In that case, 
we need to consider the spin group $\Spin_N$.  In this paper, we prove that there exists
a split element in $\Spin_N$ such that its image in $SO_N$ is a split element 
in the sense of [S3, II]. 
We also prove the existence of the split elements in the case where 
$G = SL_n$ and $F$ is of non-split type, for an arbitrary $p > 0$.  
We determine the scalars $\g_j$ in those cases.  Thus the generalized Green 
functions can be computed in those cases.  Since the determination of 
generalized Green functions is reduced to the case of simply-connected, 
simple groups, our results imply that all the generalized Green functions 
are computable for any reductive group of classical type. 
\par
The main ingredients for the proof is the Frobenius version of the
restriction theorem  (1.8.1) proved in [S3, II].  This is especially useful 
in the case of $SL_n$ of non-split type. In [S3, I], the existence of split 
elements was proved by the aid of the graded Hecke algebras,  but in that case 
we need to assume that $p$ is large enough for applying Jacobson-Morozov theorem 
in $p > 0$.  The present proof using the restriction theorem is much simpler, and 
works well for any $p$.   

\section{Generalized Green functions}

\para{1.1.}
Let $G$ be a connected reductive group defined over a finite field 
$\BF_q$ with Frobenius map $F$. Let $\Bk$ be the algebraic closure of 
$\BF_q$ with $\ch \Bk = p$. 
Let $\SM_G$ be the set of triples $(L, C_0, \SE_0)$, up to $G$-conjugacy, 
where $L$ is a Levi
subgroup of a parabolic subgroup of $G$, and $\SE_0$ is a cuspidal local 
system on a unipotent class $C_0$ of $L$. As in (1.2.2) in [S3, I], one can 
define a semisimple perverse sheaf $K$ on $G$ associated to 
a triple $(L, C_0, \SE_0) \in \SM_G$.  Let $\SW _L = N_G(L)/L$.  It is known 
that $\SW_L$ is a Coxeter group, and $\End K \simeq \Ql[\SW_L]$, $K$ is 
decomposed as
\begin{equation*}
\tag{1.1.1}
K \simeq \bigoplus_{E \in \SW_L\wg}E \otimes K_E,
\end{equation*}
where $K_E \simeq \Hom (E, K)$ is a simple perverse sheaf associated to $E \in \SW_L\wg$. 
\par
Let $G\uni$ be the unipotent variety of $G$, and 
$\SN_G$ the set of $(C,\SE)$, where $C$ is a unipotent class of $G$, and 
$\SE$ is an irreducible local system on $C$.  
Let $Z_L$ be the center of $L$, and put $d = \dim Z_L$.  It is known that
$K[-d]|_{G\uni}$ is a semisimple perverse sheaf on $G\uni$, equipped with $\SW_L$-action, 
and is decomposed as
\begin{equation*}
\tag{1.1.2}
K[-d]|_{G\uni} \simeq \bigoplus_{(C,\SE) \in \SN_G}
                V_{(C,\SE)} \otimes \IC(\ol C, \SE)[\dim C],
\end{equation*}
where $V_{(C, \SE)}$ is an irreducible $\SW_L$-module if it is non-zero.
It follows from (1.1.1) 
and (1.1.2) that there exists a map $\SW_L\wg \to \SN_G$, 
$E \mapsto (C,\SE)$ such that 
\begin{equation*}
\tag{1.1.3}
K_E|_{G\uni} \simeq \IC(\ol C, \SE)[\dim C + \dim Z_L]
\end{equation*}
and that $V_{(C,\SE)} \simeq E$. The generalized Springer correspondence shows that 
this map gives a bijection 
\begin{equation*}
\tag{1.1.4}
\bigsqcup_{(L, C_0, \SE_0) \in \SM_G}\SW_L\wg \to \SN_G.
\end{equation*}

\para{1.2.}
$F$ acts naturally on the set $\SN_G$ and $\SM_G$ by 
$(C, \SE) \mapsto (F\iv(C), F^*\SE)$, 
$(L, C_0, \SE_0) \mapsto (F\iv(L), F\iv(C_0), F^*\SE_0)$. 
The map in (1.1.4) is compatible with this $F$-action. 
We choose $(L, C_0, \SE_0) \in \SM_G^F$. Here we may assume that 
$L$ is an $F$-stable Levi subgroup of an $F$-stable parabolic subgroup $P$ of $G$,
and that $F(C_0) = C_0, F^*\SE_0 \simeq \SE_0$.  
Since $\SE_0$ is a simple local system, the isomorphism $\vf_0: F^*\SE_0 \isom \SE_0$
is unique up to scalar.
We choose $\vf_0$ by the condition that 
the induced map on the stalk of $\SE_0$ at any point in $C_0^F$ is of finite order.  
$\vf_0$ induces a natural isomorphism $\vf : F^*K \isom K$. 
We consider the characteristic function $\x_{K,\vf}$ of $K$.  The restriction 
of $\x_{K,\vf}$ on $G^F\uni$ is the so-called generalized Green function 
$Q^G_{L, C_0, \SE_0, \vf_0}$ (cf. [L2, II]), 
which is a $G^F$-invariant function on $G^F\uni$ with values in $\Ql$.

\para{1.3.}
Let $(L,C_0, \SE_0) \in \SM_G^F$ be as in 1.2. Then $F$ acts naturally on 
$\SW_L$, which induces a Coxeter group automorphism $\s$ of order $c$.  We consider 
the semidirect product $\wt \SW_L = \lp \s \rp \ltimes \SW_L$.  If an irreducible 
representation $E$ of $\SW_L$ is $F$-stable, namely, $F(E) \simeq E$ as $\SW_L$-modules, 
$E$ can be extended to a $\wt\SW_L$-module
in $c$ different ways.  For each $E \in (\SW_L\wg)^F$, we choose an irreducible representation 
$\wt E$ of $\wt\SW_L$, which is the preferred extension of $E$ 
in the sense of [L2, IV, (17.2)].   
Let $\s_E : E \isom E$ be the action of $\s$ on $\wt E$.  
If $E$ is $F$-stable, then $E \otimes K_E$ is $F$-stable in the decomposition 
of $K$ in (1.1.1), and 
there exists a unique isomorphism $\vf_E : F^*K_E \isom K_E$ 
such that $\s_E \otimes \vf_E : F^*(E \otimes K_E) \isom  E \otimes K_E$ 
is the restriction of $\vf : F^*K \isom K$. 
\par
Let $(C,\SE) \in \SN_G^F$ be any pair.  By the generalized Springer correspondence, 
there exists a unique triple $(L, C_0, \SE_0) \in \SM_G^F$ such that $(C, \SE)$ corresponds to
$E \in (\SW_L\wg)^F$ under (1.1.4).  Put 
\begin{equation*}
\tag{1.3.1}
\begin{aligned}
a_0 &= -\dim Z_L - \dim C, \\ 
r &= \dim G - \dim L + \dim (C_0 \times Z_L)
\end{aligned}
\end{equation*}
Then we have
\begin{equation*}
\tag{1.3.2}
a_0 + r = (\dim G - \dim C) - (\dim L - \dim C_0).
\end{equation*}
By (1.1.3), $\SH^{a_0}K_E|_C = \SE$.
We define an isomorphism $\psi : F^*\SE \isom \SE$ so that 
$q^{(a_0+r)/2}\psi$ corresponds to the map 
$F^*(\SH^{a_0}K_E) \isom \SH^{a_0}K_E$ induced from $\vf_E$. 
For each $j = (C,\SE) \in \SN_G^F$, we define a function $Y_j$ on $G^F\uni$
by 
\begin{equation*}
\tag{1.3.3}
Y_j(g) = \begin{cases}
           \Tr(\psi, \SE_g)  &\quad\text{ if $g \in C^F$, } \\
            0                &\quad\text{ otherwise. }
         \end{cases}
\end{equation*}

\para{1.4.}
The set $\{ Y_j \mid j \in \SN_G^F\}$ gives a basis of the $\Ql$-space of 
$G^F$-invariant functions of $G^F\uni$. 
In [L2, V], Lusztig gave a general algorithm of computing the generalized Green 
function as an explicit linear combination of various $Y_j$. Thus the determination 
of the generalized Green functions is reduced to the determination of $Y_j$.  
Since $\SE$ is a simple local system, the isomorphism $\psi : F^*\SE \isom \SE$
is unique up to scalar. 
The typical isomorphism  $\psi_0: F^*\SE \isom \SE$ among them 
is given as follows:
choose $u \in C^F$, and put $A_G(u) = Z_G(u)/Z_G^0(u)$. Then $F$ acts naturally on 
$A_G(u)$, and the set of $G$-equivariant simple local systems $\SE'$ on $C$ such that 
$F^*\SE' \simeq \SE'$ is in bijection with the set of $F$-stable irreducible 
representations of $A_G(u)$. The correspondence is given so that 
the stalk $\SE'_u$ has the structure of the dual of the corresponding irreducible $A_G(u)$-module.
Let $\r$ be the $F$-stable irreducible representation of $A_G(u)$ corresponding to $\SE$. 
Let $\tau$ be the restriction of $F$ on $A_G(u)$, and 
consider the semidirect product $\wt A_G(u) = \lp\tau\rp \ltimes A_G(u)$. 
Then $\r$ can be extended to an irreducible representation of $\wt A_G(u)$. We choose
an extension $\wt\r$ of $\r$.  Here we consider $\SE'_u$ as an irreducible 
$\wt A_G(u)$-module corresponding to $\wt \r^*$, where $\wt\r^*$ is the dual of $\wt\r$. 
\par
We define $\psi_0: F^*\SE \isom \SE$ by the condition that the induced map on $\SE_u$ 
coincides with the action of $\tau$, 
and define a function $Y_j^0 : G^F\uni \to \Ql$
in a similar way as $Y_j$, but by replacing $\psi$ by $\psi_0$. 
Note that the set of $G^F$-conjugacy classes in $C^F$ is 
in bijection with the set of $A_G(u)$-conjugacy classes in the coset 
$A_G(u)\tau \subset \wt A_G(u)$. We denote by
$u_a$ the $G^F$-class corresponding to the class $a\tau \in A_G(u)\tau$. 
Then $Y_j^0$ is explicitly given as follows;
\begin{equation*}
\tag{1.4.1}
Y_j^0(g) = \begin{cases}
             \Tr(a\tau, \wt\r^*) &\quad\text{ if $g$ is $G^F$-conjugate to $u_a$, } \\
             0          &\quad\text{ if $g \notin C^F$.}
           \end{cases}
\end{equation*}  

\par
Since $\psi$ can be written as $\psi = \g\psi_0$ for some $\g \in \Ql^*$, 
we have $Y_j = \g Y_j^0$.  Thus the determination of $Y_j$ is reduced 
to that of $\g$.  
We choose $u_0 \in C_0^F$, and consider 
$\wt A_L(u_0) = \lp\tau_0\rp \ltimes A_L(u_0)$.
Let $\r_0$ be the irreducible representation of $A_L(u_0)$ corresponding to $\SE_0$, 
and $\wt \r_0$ its extension to $\wt A_L(u_0)$.  We can define $\vf_0: F^*\SE_0 \isom \SE_0$
by the condition that the map on $(\SE_0)_{u_0}$ induced from $\vf_0$ 
coincides with the action of $\tau_0$ on $\wt\r_0$.  Thus $\g$ depends on the choice of 
$u_0, \wt\r_0, u, \wt\r$, and we denote it by $\g = \g(u_0, \wt\r_0, u, \wt\r)$. 
In this paper, we shall prove the following result.

\begin{thm} 
Assume that $G$ is a connected reductive group of classical type.
For each $F$-stable unipotent class $C$ of $G$, 
there exists $u^{\bullet} \in C^F$ satisfying the following property; 
for a triple $(L, C_0, \SE_0) \in \SM_G^F$, 
take $u_0^{\bullet} \in C_0^F$ as above (applied for $L$), and choose 
an extension $\wt\r_0 \in \wt A_L(u_0^{\bullet})\wg$, where $\r_0$ corresponds to $\SE_0$. 
For $(C, \SE) \in \SN_G^F$,  take $u^{\bullet} \in C^F$, and let
$\r \in A_G(u^{\bullet})\wg$ be such that $(C, \SE) \lra (u^{\bullet}, \r)$.
Then for $(C,\SE)$ belonging to the series $(L, C_0, \SE_0)$, there exists 
an extension $\wt\r$ of $\r$ such that $\g(u_0^{\bullet}, \wt\r_0, u^{\bullet}, \wt\r) = 1$.  
\par
In particular, for such a choice of 
$\vf_0: F^*\SE_0 \isom \SE_0$, $Y_j$ coincides with $Y_j^0$.  Thus the generalized Green 
functions $Q^G_{L, C_0, \SE_0, \vf_0}$ are explicitly computable for $G$.     
\end{thm}

\para{1.6.}
$u^{\bullet} \in C^F$ in the theorem is called a split element, and the $G^F$-conjugacy class 
containing $u^{\bullet}$ is called the split class in $C^F$.  
Note that the split class is independent from the choice of $(L, C_0, \SE_0)$. 
Also note that the split class
is compatible with the isogeny. 
(In the case where $G$ is an adjoint group, the split class is uniquely 
determined for a given $C^F$. In general they are not unique, for example, 
see 3.2 in the case of spin groups, and 4.7 for special linear groups.) 

\par
The proof of the theorem is reduced to the case where $G$ is simply-connected, and simple.
The existence of split elements (and the existence of $\wt\r$ for the non-split case)
was proved in [S3, I] for the case where $(G, F)$ is $SL_n$ of split type for any $p > 0$, 
or $SL_n$ of non-split type for sufficiently large $p$. 
In [S3, II], it was proved for $G = Sp_N$ or $SO_N$ for any $p > 0$.  
Thus it remains to verify it for the case where $G$ is the spin group $\Spin_N$, 
or $SL_n$ of non-split type for arbitrary $p$.  In this paper, we discuss these two cases. 

\para{1.7.}
Let $(L, C_0, \SE_0) \in \SM_G$, and let $P$ be a parabolic subgroup of $G$ 
whose Levi subgroup is $L$.  We consider another parabolic subgroup $Q \supset P$ of $G$ 
with the Levi subgroup $M \supset L$. Then $\SW_L' = N_M(L)/L$ is in a natural way 
a subgroup of $\SW_L$.
Let $u \in G,u' \in M$ be unipotent elements.  We put 
\begin{equation*}
\tag{1.7.1}
Y_{u,u'} = \{x \in G \mid x\iv ux \in u'U_Q\}. 
\end{equation*}
Then $Z_G(u) \times Z_M(u')U_Q$ acts on $Y_{u,u'}$ by 
$(g, m) : x \mapsto gxm\iv$ ($g \in Z_G(u), m \in Z_M(u')U_Q, x \in Y_{u,u'}$.  
Put 
\begin{equation*}
\tag{1.7.2}
d_{u,u'} = \frac{1}{2}(\dim  Z_G(u) + \dim Z_M(u')) + \dim U_Q.
\end{equation*}
It is known that $\dim Y_{u,u'} \le d_{u,u'}$.  We denote by $X_{u,u'}$ 
the set of irreducible components of $Y_{u,u'}$ of dimension $d_{u,u'}$. 
Then $A_G(u) \times A_M(u')$ acts on $X_{u,u'}$.  We denote by $\ve_{u,u'}$ 
the corresponding permutation representation of $A_G(u) \times A_M(u')$. 
\par
We denote by $\th^G$ the inverse map of (1.1.4), i.e., 
\begin{equation*}
\tag{1.7.3}
\th^G : \SN_G \to \bigsqcup_{(L,C_0, \SE_0)\in \SM_G}\SW_L\wg.
\end{equation*}
For each pair $(C, \SE) \in \SN_G$, we choose $(u, \r)$, where $u \in C$, 
and $\r \in A_G(u)\wg$ the irreducible representation  corresponding to $\SE$. 
We often write it as $(u, \r) \lra (C, \SE)$. 
We denote by $\th^G_{u,\r} \in \SW_L\wg$ the representation $E$ corresponding to
$(C,\SE)$ under $\th^G$. The generalized Springer correspondence $\th^M$ for $M$ 
is defined similarly to (1.7.3).
For each $(C',\SE') \in \SN_M$, we choose $(u',\r) \lra (C',\SE')$.
Then $\th^M_{u',\r'} \in (\SW_L')\wg$.    
The restriction theorem ([LS, 0.4, (4)] shows that 
\begin{equation*}
\tag{1.7.4}
\lp \r\otimes \r'^*, \ve_{u,u'}\rp_{A_G(u) \times A_M(u')} = 
                     \lp \th^G_{u, \r}|_{\SW_L'}, \th^M_{u',\r'}\rp_{\SW_L'}
\end{equation*}
if $(u,\r)$ and $(u',\r')$ belong to the same class $(L,C_0, \SE_0)$, where
$\r'^*$ is the dual representation of $\r'$.  
The left hand side is equal to zero if $(u,\r)$ and $(u',\r')$ belong to 
a different class. Moreover, every irreducible representation of 
$A_G(u) \times A_M(u')$ which occurs in $\ve_{u,u'}$ is obtained in this way. 

\para{1.8.}
We assume that $L \subset P, M \subset Q$ are both $F$-stable. 
We consider $Y_{u,u'}$ in (1.7.1), and assume that $u, u'$ are $F$-stable. 
Then $F$ acts naturally on $Y_{u,u'}$, and acts on $X_{u,u'}$ as a permutation
of irreducible components. 
We have an action of $F$ on $\ve_{u,u'}$. 
Put $A(u,u') = A_G(u) \times A_M(u')$, then $F$ acts diagonally on $A(u,u')$.
We define $\wt A(u,u') = \lp \tau_* \rp \ltimes A(u,u')$, where $\lp \tau_* \rp$ 
is the infinite cyclic group generated by $\tau_*$, and $\tau_*$ acts on 
$A(u,u')$ via $F$. 
Now $\ve_{u,u'}$ turns out to be 
an $\wt A(u,u')$-module, which we denote by $\wt\ve_{u,u'}$. 
In turn, if we choose extensions $\wt\r, \wt\r'$ of $\r, \r'$, respectively, 
they define an extension $\wt{\r \otimes \r'^*}$ of $\r \otimes \r'^*$ 
to irreducible $\wt A(u,u')$-module, where $\tau_*$ acts on 
$\r \otimes \r'^*$ as $\tau \otimes \tau'$ for 
$\wt A_G(u) = \lp \tau\rp \ltimes A_G(u), \wt A_M(u') = \lp \tau'\rp \ltimes A_M(u')$. 
\par
On the other hand, for each $(C, \SE) \in \SN_G^F$ corresponding to $(u, \r)$, 
we consider the map 
$\psi_0 : F^*\SE \isom \SE$ constructed in 1.4, which we denote by $\psi_{u,\r}$.
$\psi_{u,\r}$ can be extended to 
$\wt\psi_{u,\r}: F^*\IC(\ol C, \SE) \isom \IC(\ol C, \SE)$.  By comparing 
this with  the isomorphism $\vf : F^*K \isom K$ given in 1.2, we obtain an isomorphism 
$\s_{(u,\r)}: V_{(u,\r)} \isom V_{(u,\r)}$ such that 
the restriction of $\vf$ on $V_{(u,\r)}\otimes F^*\IC(\ol C,\SE)[\dim C])$
coincides with $\s_{(u,\r)} \otimes \wt\psi_{(u,\r)}$. (Here we write 
$V_{(C,\SE)}$ as $V_{(u,\r)}$). 
Applying this to the reductive group $M$, and $(C',\SE') \in \SN_M^F$, 
we obtain the map $\s_{(u',\r')} : V_{(u',\r')} \isom V_{(u',\r')}$, 
where $(u',\r') \lra (C', \SE')$. Here $V_{(u,\r)}$ is an irreducible $\SW_L$-module, 
and $V_{(u',\r')}$ is an irreducible $\SW_L'$-module.   
We can decompose $V_{(u,\r)}$ as 
\begin{equation*}
V_{(u,\r)} = \bigoplus_{V_{(u',\r')} \in (\SW'_L)\wg}M_{\r, \r'}\otimes V_{(u',\r')},
\end{equation*}
where $M_{\r, \r'} = \Hom_{\SW_L'} (V_{(u',\r')}, V_{(u,\r)}|_{\SW_L'})$ is the 
multiplicity space for $V_{(u',\r')}$.  
Thus we can define an isomorphism $\s_{\r,\r'}: M_{\r,\r'} \isom M_{\r,\r'}$
such that the restriction of $\s_{(u,\r)}$ on $M_{\r,\r'}\otimes V_{(u',\r')}$
coincides with $\s_{\r,\r'}\otimes\s_{(u',\r')}$.    
\par
A variant of the restriction theorem (1.7.4)
was proved in [S3, II, Cor.1.9].  Under the notation above, the following formula holds.
\begin{equation*}
\tag{1.8.1}
\Tr(\s_{\r,\r'}, M_{\r,\r'}) = q^{-(\dim C - \dim C')/2 + \dim U_Q}
                 \lp \wt\ve_{u,u'}, \wt{\r \otimes \r'^*}\rp_{A(u,u')\tau_*}, 
\end{equation*}
where for representations $V_1, V_2$ of $\wt A(u,u')$, 
$\lp V_1 ,V_2\rp_{A(u,u')\tau_*}$ is defined as
\begin{equation*}
\lp V_1, V_2\rp_{A(u,u')\tau_*} = |A(u,u')|\iv \sum_{a \in A(u,u')}
                                  \Tr(a\tau_*,V_1)\Tr((a\tau_*)\iv, V_2). 
\end{equation*} 

We note that the following fact ([S3, II, Lemma 1.11]) is useful 
for the determination of $\g$, combined with (1.8.1).

\par\medskip\noindent
(1.8.2) \  Suppose that $q^{-(a_0 + r)/2}\s_{(u,\r)}$ makes $\SW_L$-module
$V_{(u,\r)}$ the preferred extension to $\wt\SW_L$.  Then we have $\g = 1$. 
In particular, in the case where $F$ acts trivially on $\SW_L$, if 
$\s_{(u,\r)}$ is $q^{(a_0 + r)/2}$ times identity,  then $\g = 1$. 

\para{1.9.}
Let $Z_G$ be the center of $G$. For each $\x \in Z_G\wg$, we denote by 
$\SN_{\x}$ the set of $(C, \SE) \in \SN_G$ such that 
$Z_G$ acts on $\SE$ according to $\x$.  We denote by 
$A_G(u)\wg_{\x}$ the set of $\r \in A_G(u)\wg$ such that $Z_G$ acts on 
$\r$ according to $\x$.  If $(C, \SE) \lra (u,\r)$, then 
$(C, \SE) \in \SN_{\x}$ if and only if $\r \in A_G(u)\wg_{\x}$.   
Also we denote by $\SM_{\x}$ the set of $(L, C_0, \SE_0) \in \SM_G$
such that $Z_G$ acts on $\SE_0$ according to $\x$. 
We have a partition 
\begin{equation*}
\SN_G = \bigsqcup_{\x \in Z_G\wg}\SN_{\x}, \quad
A_G(u)\wg = \bigsqcup_{\x \in Z_G\wg}A_G(u)\wg_{\x},  \quad
\SM_G = \bigsqcup_{\x \in Z_G\wg}\SM_{\x}. 
\end{equation*}

Then the generalized Springer correspondence $\th^G$ is compatible 
with the action of $Z_G$, namely for each $\x \in Z_G\wg$, $\th^G$ restricts 
to a bijection 
\begin{equation*}
\tag{1.9.1}
\th^{\x} : \SN_{\x} \to \bigsqcup_{(L,C_0,\SE_0) \in \SM_{\x}}\SW_L\wg.
\end{equation*}

\par\bigskip
\section{ Preliminaries on spin groups}

\para{2.1.}
First we review the definition of spin groups. Let $\Bk$ be an algebraically 
closed field with $\ch \Bk \ne 2$.  Let $V$ be a vector space over $\Bk$ 
with dimension $N \ge 3$, endowed with a non-degenerate symmetric bilinear form 
$(\ ,\ )$ on $V$. 
Let $C(V)$ be the Clifford algebra associated to the form $(\ ,\ )$. It is provided 
with an embedding $V \subset C(V)$, and the product $v\cdot v'$ for $v, v' \in V$
satisfies the relation $v\cdot v' + v'\cdot v = 2(v,v')$. 
Let $C^+(V)$ (resp. $C^-(V)$) be the subspace of $C(V)$ spanned by 
products of an even number (resp. an odd number) of elements in
$V$. Then $C(V) = C^+(V) \oplus C^-(V)$, and $C^+(V)$ is 
the subalgebra of $C(V)$. The spin group $\Spin (V)$ is the subgroup of the units of $C^+(V)$ 
consisting of all products $v_1\cdots v_a$ with $a$ : even, where $v_i \in V$ 
satisfy $(v_i,v_i) = 1$. This is a closed subgroup of units in $C^+(V)$. 
If $x \in \Spin(V)$, the map $v \mapsto xvx\iv$ leaves $V$ invariant, and 
defines an element $\b(x)$ of $SO(V)$; if $x = v_1\cdots v_a$ as above, 
then $\b(x) = \b(v_1)\cdots \b(v_a)$, where $\b(v_i)(v) = -v + 2(v,v_i)v_i$ 
is $(-1)$ times the reflection with respect to $v_i$. 
Thus we have a homomorphism $\b : \Spin(V) \to SO(V)$. If we fix a basis, and 
identify $SO(V)$ with $SO_N$, then we also write $\Spin(V)$ as $\Spin_N$. 
$\b$ is the simply-connected covering of $SO(V)$.
We denote $\Spin(V)$ as $G$ and $SO(V)$ as $\ol G$. 
\par
Let $Z$ be the center of $G$.  If $N$ is odd, $Z$ has order 2, it is generated by 
$\ve = (-1)$ times the unit element in $C(V)$.  If $N$ is even, $Z$ has order 4, 
it is generated by $\ve$ and $\w = v_1\cdots v_N$, where $v_1, \dots, v_N$ are 
orthonormal basis of $V$. We have $\w^2 = \ve^{N/2}$.  Hence $Z$ is cyclic of 
order 4 generated by $\w$ if $N \equiv 2\pmod 4$, and 
$Z \simeq \lp \w\rp \times \lp \ve \rp$ is a product of two
cyclic groups of order 2 if $N \equiv 0\pmod 4$.       
Put $Z_0 = \Ker \b$.  Then $Z_0 = \{ 1, \ve\}$ is of order 2. 

\para{2.2.}
We consider the $\BF_q$-structure of $G$. Assume that $V$ is defined 
over $\BF_q$ with Frobenius map $F: V \to V$, and that the form $(\ ,\ )$
is compatible with $F$-action, namely $F(v,w) = (F(v), F(w))$. Then 
$C(V)$ has a natural $\BF_q$-structure, which induces an $\BF_q$-structure of $G$. 
We have the corresponding Frobenius map $F: G \to G$. $\ol G$ has also a natural
$\BF_q$-structure with Frobenius map $F$, and the map $\b : G \to \ol G$ is 
$F$-equivariant. 
Here $F$ acts trivially on $Z_0 = \Ker \b$.  Thus $F$ acts trivially on $Z$ 
if $N$ is odd. Assume that $N \equiv 0\pmod 4$. 
The quadratic form on $V$ associated to $(\ ,\ )$ is given for $x = \sum_ix_ie_i$ 
\begin{equation*}
(x,x) = \begin{cases}
           x_1^2 + \cdots + x_N^2  &\quad\text{ split case,} \\
           x_1^2 + \cdots + x_{N-1}^2 + \d x^2_N
                   &\quad\text{ non-split case,} 
        \end{cases}
\end{equation*}
where $e_1, \dots, e_N$ is a basis of $V$ such that $F(e_i) = e_i$, 
and $\d \in \BF_q - \BF_q^2$.
Thus in the split case, $v_i = e_i$ gives an orthonormal basis of $V$, 
stable by $F$.  This implies that $\w = v_1\cdots v_N$ is $F$-stable. 
In the non-split case, $v_1, \dots, v_N$ gives an orthonormal basis of $V$, 
where $v_i = e_i$ for $i = 1, \dots, N-1$, and $v_N = \d^{1/2}e_N$ 
with $\d^{1/2} \notin \BF_q$.  Here we have $F(\d^{1/2}) = -\d^{1/2}$, and
so $F(v_N) = -v_N$.  This implies that $F(\w) = -\w$.  Summing up the above 
argument, we have
\par\medskip\noindent
(2.2.1) \ Assume that $N$ is odd, or $N \equiv 2\pmod 4 $.  Then $F$ acts 
trivially on $Z$.  Assume that $N \equiv 0\pmod 4$.  If $(G,F)$ is of split type, 
then $F$ acts trivially on $Z$, while if $(G,F)$ is of non-split type, 
then $F(\w) = -\w$.   

\para{2.3.}
Under the generalized Springer correspondence $\th^{\x}$ in (1.9.1), 
if $\x|_{Z_0} = 1$, $\th^{\x}$ is essentially the same as the generalized 
Springer correspondence for $SO_N$. 
Now assume that $\x|_{Z_0} \ne 1$, namely that $\x(\ve) = -1$. 
It is known by [L1, 14] that $\SM_{\x}$ is in bijection with integers $d \in \BZ$ 
such that $d \equiv N\pmod 4$ and $d(2d-1) < N$.  For such $d$, there exists 
a unique triple $(L, C_0, \SE_0) \in \SM_{\x}$ with $L$ of type 
$B_{(d-1)(2d+1)/2} + A_1 + A_1 + \cdots$ if $N$ is odd, and of type
$D_{d(2d-1)/2} + A_1 + A_1 + \cdots$ if $N$ is even, where the number of 
factors of $A_1$ is $(N - d(2d-1))/4$. 
In those cases, $\SW_L \simeq W_{(N - d(2d-1))/4}$, where $W_n$ 
is the Weyl group of type $B_n$. Thus $\th^{\x}$ gives a bijection 
\begin{equation*}
\tag{2.3.1}
\th^{\x} : \SN_{\x} \isom \bigsqcup_{\substack{d \in \BZ \\4|N - d }}
                            W_{(N - d(2d-1))/4}\wg.
\end{equation*}

\para{2.4.}
$\b : G \to \ol G$ gives a bijection $G\uni \isom \ol G\uni$, and induces
a bijection between the unipotent classes in $G$ and those in $\ol G$, which 
is compatible with $F$-action. 
For $u \in G\uni$, $\b(u)$ determines a partition of $N$, by taking the 
Jordan block of $\b(u)$, which we denote by $\la(u)$.  Here we write 
a partition $\la$ of $N$ as $\la = (\la_1, \dots, \la_k)$ with 
$0 < \la_1 \le \la_2 \le \cdots \le \la_k$ with $\sum_{i=1}^k\la_i = N$.
We also write $\la$ as $\la = (1^{m_1}, 2^{m_2}, \cdots)$.
Note that the set of unipotent classes in $O_N$ are in bijection with the set 
$\wt X_N$ of partitions 
$\la$ of $N$ such that $m_i$ is even for even $i$, by considering their Jordan types.
If all the $m_i$ are even, the class in $O_N$ splits into two classes in $SO_N$, 
otherwise any class $C$ in $O_N$ gives a single $SO_N$-class $C \cap SO_N$.  
\par
Put
\begin{equation*}
\tag{2.4.1}
X_N = \{ \la \in \wt X_N \mid m_i \le 1 \text{ for odd $i$} \}
\end{equation*}  
\par
For an integer $m \in \BZ$, put 
\begin{equation*}
\tag{2.4.2}
d(m) = \begin{cases}
         0  &\quad\text{ if $m$ : even }, \\
         1  &\quad\text{ if } m \equiv 1 \pmod 4,  \\
         -1 &\quad\text{ if } m \equiv -1 \pmod 4.       
        \end{cases}
\end{equation*}
For $\la \in X_N$, we define an integer $d(\la)$ by 
$d(\la) = \sum_id(\la_i)$. The following result was proved in [L1, Prop. 14.4], [LS, Cor. 4.10].

\begin{prop} 
Assume that $\x \in Z\wg$ is such that $\x(\ve) = -1$.  Then there exists a bijection 
between $\SN_{\x}$ and $X_N$ with the following properties;
\begin{enumerate}
\item
Let $(C, \SE) \in \SN_{\x}$ with $(u, \r) \lra (C,\SE)$.  
Then $\la(u) \in X_N$.  Under 
the correspondence (2.3.1), $(C,\SE)$  is mapped to the factor of $d$, 
where $d = d(\la)$ for $\la = \la(u)$.
\item
Let $I = \{ 1 \le i \le k \mid \la_i \text{ : odd} \}$.    
Define an integer $r$ by 
\begin{equation*}
r = \begin{cases}
            (|I|-1)/2   &\quad\text{ if $|I|$ is odd,}  \\
            (|I| -2)/2  &\quad\text{ if $|I|$ is even $> 0$, } \\
            0           &\quad\text{ if $|I| = 0$.}        
          \end{cases}
\end{equation*}
Then $\r$ is a unique irreducible representation of $A_G(u)$ of dimension $2^r$
such that $Z$ acts on $\r$ by $\x$.  
(More precisely, there exists a unique irreducible representation of 
dimension $2^r$ on which $\ve$ acts as $-1$ if $|I|$ is odd, and there exists
exactly two irreducible representations of dimension $2^r$ on 
which $\ve$ acts as $-1$ if $|I|$ is even.) 
\end{enumerate}
\end{prop}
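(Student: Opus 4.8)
\medskip\noindent
\emph{Proof.}
Both assertions are contained in [L1, Prop.\,14.4] and [LS, Cor.\,4.10]; the route I would take to recover them is a direct analysis of the component groups $A_G(u)$, together with the compatibility (1.9.1) of the generalized Springer correspondence with the $Z$-action.

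\textbf{Step 1: locating $\SN_{\x}$ among unipotent classes.}
Fix $u \in G\uni$, put $\bar u = \b(u)$, and let $\la = \la(u) \in \wt X_N$ be its Jordan type. Since $\b$ is a central isogeny with kernel $Z_0 = \{1,\ve\}$, one has $Z_G(u) = \b\iv(Z_{\ol G}(\bar u))$, so that $A_G(u)$ fits in an exact sequence
\begin{equation*}
1 \longrightarrow Z_0/(Z_0 \cap Z_G^0(u)) \longrightarrow A_G(u) \longrightarrow A_{\ol G}(\bar u) \longrightarrow 1.
\end{equation*}
The decisive point is whether $\ve \in Z_G^0(u)$: if so, $\ve$ acts trivially on every $\r \in A_G(u)\wg$ and the class of $u$ contributes nothing to $\SN_{\x}$; if not, genuine local systems appear. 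Now $\ve$ lies in the identity component of $\b\iv\bigl(Z_{\ol G}^0(\bar u)\bigr)$ precisely when the reductive part $\prod_{i \text{ odd}} SO_{m_i} \times \prod_{i \text{ even}} Sp_{m_i}$ of $Z_{\ol G}^0(\bar u)$ lifts to $\Spin(V)$; examining spinor norms on the pieces $V_i$ shows that this lifting fails exactly when some odd $i$ has $m_i \ge 2$, that is, exactly when $\la \notin X_N$. Hence $\ve \notin Z_G^0(u)$ if and only if $\la(u) \in X_N$; this gives the first assertion of (i) and shows that the classes underlying $\SN_{\x}$ are precisely those with $\la(u) \in X_N$.

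\textbf{Step 2: structure of $A_G(u)$ and the genuine representations.}
Assume $\la \in X_N$, so the odd parts of $\la$ are distinct; let $I$ denote the set of indices of odd parts (thus $|I|$ is odd when $N$ is odd). Then $A_{\ol G}(\bar u)$ is elementary abelian of rank $\max(|I|-1,0)$ --- when $|I| = 0$, i.e.\ $\la$ very even, the $\ol G$-class splits and that case needs separate bookkeeping --- and by Step 1, $A_G(u)$ is a $2$-group of order $2\,|A_{\ol G}(\bar u)|$ with $\ve$ in its centre. The commutator map of $A_G(u)$, read off from the $\Spin$-extension, is an alternating $\FF_2$-form whose radical is the centre of $A_G(u)$; this identifies $A_G(u)$ as an (almost) extraspecial $2$-group. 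The representation theory of such groups then shows that the irreducibles on which $\ve$ acts by $-1$ all have the same dimension $2^r$, where $2r$ is the rank of the induced form --- namely $r = (|I|-1)/2$ if $|I|$ is odd and $r = (|I|-2)/2$ if $|I|$ is even --- and that there is a single such irreducible when $|I|$ is odd and exactly two when $|I|$ is even, the two then differing in the character by which the remaining generator $\w$ of $Z$ acts. Since for even $N$ these two sit over the two characters of $Z$ on which $\ve$ acts by $-1$, for our fixed $\x$ there is exactly one such $\r$; this is (ii).

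\textbf{Step 3: the block $d$ and bijectivity.}
It remains to place $(C,\SE)$ with $\la(u) = \la$ in the correct factor of (2.3.1). For each integer $d$ indexing (2.3.1), the associated cuspidal datum in $\SM_{\x}$ has $\SW_L \simeq W_{(N-d(2d-1))/4}$; following Lusztig's inductive description of the generalized Springer correspondence for classical groups in terms of symbols, one computes that the symbol of $(C,\SE)$ has defect governed by the contributions $d(\la_i)$ --- $+1$ if $\la_i \equiv 1$, $-1$ if $\la_i \equiv -1 \pmod 4$, and $0$ if $\la_i$ is even --- so that the block is $d(\la) = \sum_i d(\la_i)$, giving the rest of (i). Finally, the map $\SN_{\x} \to X_N$ thus obtained is a bijection, since for each such $d$ the number of $\la \in X_N$ with $d(\la) = d$ equals $|W_{(N-d(2d-1))/4}\wg|$ --- both counting pairs of partitions of $(N-d(2d-1))/4$ --- while by Step 2 each such $\la$ accounts for exactly one element of $\SN_{\x}$ for the fixed $\x$. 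The main obstacle I anticipate is Step 2: pinning down the (in general non-abelian) $2$-group $A_G(u)$, the alternating form coming from the $\Spin$-cover, the very-even/split-class case, and the one-versus-two dichotomy for the genuine irreducibles according to the parity of $|I|$; the symbol bookkeeping of Step 3 that yields $d(\la) = \sum_i d(\la_i)$ is the secondary difficulty.
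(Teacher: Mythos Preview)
The paper does not give its own proof of this proposition: it is stated with the attribution ``The following result was proved in [L1, Prop.~14.4], [LS, Cor.~4.10]'' and no argument is supplied. Your sketch is therefore not competing with a proof in the paper but rather outlining the content of those references, and in broad strokes it does so correctly. In fact the paper's \S2.6 carries out your Step~2 in detail (the Clifford-algebra generators $x_i$, the relations $x_i x_{i'} = \ve x_{i'} x_i$, and the identification $\Ql[\vG]/\lp 1+\ve\rp \simeq C^+(V_I)$ yielding the one-versus-two genuine irreducibles of dimension $2^r$), so your anticipated ``main obstacle'' is exactly what the paper chooses to spell out.

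One slip to flag in Step~1: your sentence ``$\ve$ lies in the identity component of $\b\iv(Z_{\ol G}^0(\bar u))$ precisely when the reductive part \ldots\ lifts to $\Spin(V)$'' has the direction reversed. If the reductive part admits a lifting homomorphism to $\Spin(V)$ then its preimage splits as a product with $\{1,\ve\}$ and is disconnected, so $\ve$ is \emph{not} in the identity component. What actually happens when some odd $i$ has $m_i\ge 2$ is that the diagonal embedding $SO_{m_i}\hookrightarrow SO(V)$ (each $g$ acting as $g\otimes 1$ on $\Bk^{m_i}\otimes\Bk^i$) induces multiplication by $i$ on $\pi_1$, hence an isomorphism since $i$ is odd; thus the restricted cover is the nontrivial $\Spin_{m_i}\to SO_{m_i}$, the preimage is connected, and $\ve$ \emph{does} lie in $Z_G^0(u)$. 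Your final conclusion ``$\ve\notin Z_G^0(u)$ iff $\la\in X_N$'' is correct, but you reach it through two sign errors that cancel. This is worth straightening out, since the paper relies on this dichotomy (see 2.6 and Remark~2.7) without reproving it.
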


\para{2.6.}
Take $(u,\r) \lra (C, \SE) \in \SN_{\x}$ as in Proposition 2.5.  
Following [L1, 14.2], we shall determine the structure of $A_G(u)$, and 
construct the representation $\r \in A_G(u)\wg$ explicitly.
Put $\b(u) = \ol u \in \ol G$.  Then we have $\b\iv(Z_{\ol G}(\ol u)) = Z_G(u)$.
$\b$ induces a natural surjective map $A_G(u) \to A_{\ol G}(\ol u)$. 
In the case where $\la \in X_N$, 
$\ve \notin Z_G^0(u)$, and $\b\iv(Z_{\ol G}(\ol u))$ consists of
two connected components.  Thus $A_G(u)$ is a central extension of $A_{\ol G}(\ol u)$
with a kernel of order 2.  
\par
This central extension can be described as follows. Let $I$ be the set as in 
Proposition 2.5. 
We can write $V = \bigoplus_{i \in I}V_i \oplus V'$, the orthogonal direct sum decomposition
into $\ol u$-stable subspaces, where $\dim V_i = \la_i$ for $i \in I$.
For each $i \in I$, we consider an orthonormal basis $v_1^i, \dots, v_h^i$ of $V_i$
(here we put $h = \la_i$).
Put $x_i = v^i_1v^i_2\cdots v^i_h \in C(V)$ (note that $h$ is odd).
Those $x_i$ satisfy the relation 
\begin{equation*}
\tag{2.6.1}
x_i^2 = \ve^{\la_i(\la_i -1)/2},  \quad 
x_ix_{i'} = \ve x_{i'}x_i.
\end{equation*}
In fact, it follows from the defining relations that
$(v^i_a)^2 = (v^i_a, v^i_a) = 1$, 
$v^i_av^i_{a'} + v^i_{a'}v^i_a = 2(v^i_a, v^i_{a'}) = 0$, namely 
$v^i_av^i_{a'} = - v^i_{a'}v^i_a$ for $a \ne a'$. 
Then we have, for $i \ne i'$, 
\begin{align*}
\tag{2.6.2}
x_i^2      &= (-1)^{h-1}(v^i_2\dots v^i_h)(v^i_2\dots v^i_h) 
                              = \cdots 
                              = (-1)^{h(h-1)/2}, \\
x_ix_{i'} &= (-1)^hv^{i'}_1(v^i_1\cdots v^i_h)(v^{i'}_2\cdots v^{i'}_{h'}) = 
         \cdots = (-1)^{hh'}(v^{i'}_1\cdots v^{i'}_{h'})(v^i_1\cdots v^i_h),  
\end{align*} 
where $h'= \la_{i'}$.  
Since $h, h'$ are odd, we obtain (2.6.1). 
\par
Let $\wh\vG$ be the subgroup of the group of units of $C(V)$ generated by 
$x_i$ ($i \in I$).   Let $\vG$ be the subgroup of $\wh\vG$ consisting of elements 
which are products of an even number of generators $x_i$. 
Then $\vG \subset Z_G(u)$, and the map $\vG \to Z_G(u)/Z_G^0(u)$ gives an isomorphism.
The central extension $Z_G(u)/Z_G^0(u) \to Z_{\ol G}(\ol u)/Z_{\ol G}^0(\ol u)$
can be described by $\vG \to \vG/\{ 1, \ve\}$. 
(Note that $\ve = x_ix_{i'}(x_{i'}x_{i})\iv \in \vG$.)     
\par
Let $C(V_I)$ be the Clifford algebra over $\Ql$ associated to the vector 
space $V_I \simeq \Ql^{|I|}$ and the quadratic form 
$\sum_{i \in I}(-1)^{\la_i(\la_i-1)/2}X_i^2$ with variables $X_i (i \in I)$, and 
$C^+(V_I)$ its $+$-part. 
Let $\Ql[\vG]$ be the group algebra of $\vG$ and $\Ql[\vG]/\lp 1 + \ve \rp$
the quotient algebra of $\Ql[\vG]$ by the two-sided ideal generated by
$1 + \ve$. Then $\Ql[\vG]/\lp 1 + \ve \rp \simeq C^+(V_I)$. 
It follows that there exists an algebra homomorphism $f: \Ql[\vG] \to C^+(V_I)$.
It is known (cf. [L1, 14.3]) that,$C^+(V_I)$ is a simple algebra in the case where $|I|$ is odd 
with $\dim C^+(V_I) = 2^{|I|-1}$, and is the direct sum of two simple components 
of the same dimension in the case where $|I|$ is even with $\dim C^+(V_I) = 2^{|I|-2}$.  
Thus if $|I|$ is odd, $f$ gives rise to an irreducible representation 
$\r : \vG \to GL(V_{\r})$, where $C^+(V_I) = \End (V_{\r})$, and if $|I|$ is 
even, it gives rise to two irreducible representations $\r, \r'$ such that 
$C^+(V_I) \simeq \End(V_{\r}) \oplus \End(V_{\r'})$.  
Here $\dim \r = 2^{(|I|-1)/2}$ if $|I|$ is odd, and 
$\dim \r = \dim \r'= 2^{(|I|-2)/2}$ if $|I|$ is even. 

\remark{2.7.}
We take $\x \in Z\wg$ such that $\x(\ve) = 1$. Note that in this case $\x$ is $F$-stable.
As mentioned in 2.3, the generalized Springer correspondence $\th^{\x}$ is essentially the same 
as  the case of $SO_N$.  By [S3, II], we already know that Theorem 1.5 holds for $SO_N$. 
Thus for $(C, \SE) \in \SN^F_{\x}$, there exists a split element 
$v^{\bullet} \in \b(C)^F$. 
By Proposition 2.5, the Jordan type $\la$ of $\b(C)$ is not contained in $X_N$. 
Since $\b$ gives a bijection $C^F \isom \b(C)^F$, one can find $u \in C^F$ such that
$\b(u) = v^{\bullet}$. In this case, it is known by [L1, 14.3] that 
$A_G(u) \simeq A_{\ol G}(v^{\bullet})$. This isomorphism is compatible with $F$, 
thus $A_G(u)$ is an elementary abelian two group with trivial $F$-action. 
We define a split element as $u = u^{\bullet} \in C^F$, then the $G^F$-class of 
$u^{\bullet}$ is uniquely determined by the split class in $\b(C)^F$.  
Theorem 1.5 holds for such $\x \in Z\wg$. Thus the verification of the theorem 
is reduced to the case where $\x(\ve) = -1$, and to the determination of 
the split class in $C^F$ for $C$ of Jordan type $\la \in X_N$.    

\remark{2.8.}
Assume that $F$ is of non-split type.  Then $N$ is even, 
and $Z$ is generated by $\ve$ and $\w$.  
We consider the set $\SN_{\x}$ in Proposition 2.5.
Then $\x(\ve) = -1$. 
Since $F$ is non-split, we have $F(\w) = -\w$ by (2.2.1).
Thus 
\begin{equation*}
\x(F(\w)) = \x(-\w) = \x(\ve)\x(\w) = -\x(\w).
\end{equation*} 
This shows that $\x$ is not $F$-stable, and so $\SN_{\x}^F = \emptyset$.
In the non-split case, we don't need to consider the situation as in 
Proposition 2.5, and so Theorem 1.5 automatically holds. 

\par\bigskip
\section{Proof of Theorem 1.5 -- the case of spin groups}

\para{3.1.}
By Remark 2.8, we may consider the case where 
$F$ is of split type. So hereafter we assume that $F$ is  split. 
Following [S3, II], we review the definition of split elements in $SO_N$.
Assume that $\la \in X_N$, and write it as 
$\la = (\la_1 \le \la_2 \le \cdots \le \la_k)$. 
\par\medskip
(a) \ Assume that $\la_j$ is odd, and put $h = \la_j$. 
We consider the vector space $V_j$ over $\Bk$ of $\dim V_j = h$
with basis $e_1^j, \dots, e_h^j$.
Put
\begin{equation*}
\tag{3.1.1}
\d_j = \d_j(\la) = (\la_j-1)/2 + j.
\end{equation*}
We define a non-degenerate symmetric bilinear form $f_j$ on $V_j$
by
\begin{equation*}
\tag{3.1.2}
f_j(e^j_{h-a+1}, e_a^j) = (-1)^{\d_j-a}, \qquad (1 \le a \le h)
\end{equation*} 
and $f_j = 0$ for all other pairs of bases. 
We define a nilpotent transformation $x_j$ on $V_j$ by 
$x_j(e^j_a) = e^j_{a-1}$ with the convention $e^j_0 = 0$.  
Then $x_j \in \Fg_j$, where $\Fg_j = \Lie SO(V_j, f_j)$. 
\par\medskip
(b) \ Assume that $\la_j = \la_{j+1}$ is even. Put $h = \la_j$.
We consider a vector space $V_j$ of dimension $2h = 2\la_j$ with basis
$e^j_1, \dots, e^j_h, e_1^{j+1}, \dots, e^{j+1}_h$. 
We define a non-degenerate symmetric bilinear form $f_j$ on $V_j$ by 
\begin{equation*}
\tag{3.1.3}
f_j(e^j_{h-a+1}, e_a^{j+1}) = (-1)^{a-1}, 
\qquad (1 \le a \le h) 
\end{equation*}
and $f_j = 0$ for all other pairs of bases. 
We define a nilpotent transformation $x_j$ on $V_j$ by 
$x_j(e^j_a) = e^j_{a-1}$, $x_j(e^{j+1}_a) = e^{j+1}_{a-1}$, 
with the convention $e^j_0 = e^{j+1}_0 = 0$. 
Then $x_j \in \Fg_j$, where $\Fg_j = \Lie SO(V_j, f_j)$. 
\par\medskip
Let $V = \bigoplus_jV_j$, where 
the sum is taken all $j$ such that $\la_j$ is odd, and a half of $j$ 
such that $\la_j$ is even (note that 
$\sharp\{j \mid \la_j = h\}$ is even for even $h$).  
We define a symmetric bilinear form 
$f = \bigoplus_jf_j$ on $V$.  Then $x = \sum_jx_j\in \Fg = \bigoplus_j\Fg_j$, 
where we can regard $\Fg = \Lie SO(V, f)$.    
If we assume that $\{ e^j_a \}$ is an $\BF_q$-basis of 
$V$, then $f$ gives an $F$-invariant symmetric bilinear form on $V$, which we simply
denote by $(\ ,\ )$, and also write $SO(V)$ as $\ol G$.  
$x$ determines the $\ol G^F$-conjugacy class of unipotent elements in $\ol G^F$, 
which is nothing but the split class in $\ol C^F$ for the unipotent class 
$\ol C$ in $\ol G$ with Jordan type $\la$.  We denote the split class by 
$\ol C^{\bullet} \subset \ol C^F$. 
We choose a split unipotent element $v = v^{\bullet} \in \ol C^{\bullet}$. 

\para{3.2.}
Let $C$ be the unipotent class in $G$ with Jordan type $\la \in X_N$. 
Then $\b$ induces a bijection $\b : C^F \isom \ol C^F$.  
We choose $u \in C^F$ such that $\b(u) = \ol u = v$. 
But in contrast to the case where $\la \notin X_N$ (see Remark 2.7), 
the set $\b\iv(\ol C^{\bullet})$ is not a single $G^F$-class, so 
the split class in $\ol C^F$ does not determine a unique $G^F$-class 
in $C^F$. We define the split classes in $C^F$ all the $G^F$-classes 
contained in $\b\iv(\ol C^{\bullet})$.  
In the following discussion, we shall show that those $G^F$-classes 
satisfy the property of the split class.  
\par
We consider the action of  $F$ on $A_G(u)$. 
$\b$ induces a surjective map $\wt\b: A_G(u) \to A_{\ol G}(\ol u)$ 
with $\Ker \wt\b = \{ 1, \ve\}$.
Here $\wt\b$ is $F$-equivariant, and we know that $F$ acts trivially 
on $A_{\ol G}(\ol u)$.
For any $x \in A_G(u)$, $\wt\b\iv(\wt\b(x)) = \{ x, \ve x\}$, 
and so $F(x) = x$ or $\ve x$.  Thus $F^2$ acts trivially on $A_G(u)$. 
\par
We determine the action of $F$ on $A_G(u)$ explicitly.
By the discussion in 2.6, $A_G(u)$ is constructed from  the orthonormal basis 
for $V_j$ with $\la_j$: odd.   So we need to convert the basis in $V_j$ to
an orthonormal basis.  
We consider the following situation.  Let $M$ be a vector space 
of $\dim M = h$ ($h$: odd) with basis $e_1, \dots, e_h$, endowed with 
non-degenerate symmetric bilinear form $(e_a, e_{h-a+1}) = (-1)^{c + a}$
for some fixed integer $c$.
Let $\z \in \Bk$ be such that $\z^2 = -1$.  Thus $F(\z) = \z$ if 
$q \equiv 1 \pmod 4$ and $F(\z) = -\z$ if $q \equiv -1 \pmod 4$.  
The following lemma can be checked by a direct computation. 

\begin{lem}  
Put $\g = (-1)^{c+1}$ and $h = 2m+1$. 
Assume that $F: M \to M$ is a Frobenius map with $F(e_i) = e_i$.
\begin{enumerate}
\item
We define elements $v_1', \dots, v_h'$ of $M$ by 
\begin{equation*}
\tag{3.3.1}
\begin{aligned}
v'_1 &= e_1 + \frac{\g}{2}e_{h},  &\quad v'_2 = e_2 -\frac{\g}{2}e_{h-1}, 
       &\quad v'_3 = e_3 + \frac{\g}{2}e_{h-2}, \dots  \\ 
v'_{h} &= e_1 - \frac{\g}{2}e_{h}, &\quad v'_{h-1} = e_2 + \frac{\g}{2} e_{h-1}, 
    &\quad v'_{h-2} = e_3 - \frac{\g}{2}e_{h-2}, \dots \\
v'_{m+1} &= e_{m+1}.
\end{aligned}
\end{equation*}
Then $(v'_1, v'_1) = (v'_2, v'_2) = \cdots = (v'_{m}, v'_{m}) = 1$,
     $(v'_{m+1}, v'_{m+1}) = (-1)^{m+1 + c}$, 
and $(v'_h, v'_h) = (v'_{h-1}, v'_{h-1}) = \cdots = (v'_{m+2}, v'_{m+2}) = -1$.
Moreover, $F(v'_a) = v'_a$ for $1 \le a \le h$.  

\item 
Define $v_1, \dots, v_h \in M$ by $v_1 = v'_1, v_2 = v'_2, \dots, v_m = v'_m$, 
$v_h = \z v'_h, v_{h-1} = \z v'_{h-1}, \dots v_{m+2} = \z v'_{m+2}$ and 
$v_{m+1} = v'_{m+1}$ (resp. $\z v'_{m+1}$) if $m+1 + c$ is even (resp. odd).
Then $v_1, \dots, v_h$ gives an orthonormal basis of $M$.  
Moreover, $F(v_i) = v_i$ for any $i$ if $q\equiv 1\pmod 4$.  
If $q \equiv -1\pmod 4$, then  
$F(v_1) = v_1, \dots, F(v_m) = v_m$, 
$F(v_h) = -v_h, F(v_{h-1}) = -v_{h-1}, \dots, F(v_{m + 2}) = -v_{m+2}$, 
and $F(v_{m+1}) = (-1)^{m +1 + c}v_{m+1}$.  
\end{enumerate}
\end{lem}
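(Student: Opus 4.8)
The lemma is proved by a direct computation, so the plan is simply to carry out that computation while being careful with the index conventions implicit in (3.3.1).

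\emph{Part (i).} First I would record the pattern in (3.3.1) in closed form: for $1 \le a \le m$ one has $v'_a = e_a + (-1)^{a-1}\frac{\gamma}{2}e_{h-a+1}$ and $v'_{h-a+1} = e_a + (-1)^{a}\frac{\gamma}{2}e_{h-a+1}$, while $v'_{m+1} = e_{m+1}$. Since $F(e_i) = e_i$ for all $i$ and $F$ fixes the scalars $\gamma = (-1)^{c+1}$ and $\frac12$, it is immediate that $F(v'_a) = v'_a$ for $1 \le a \le h$, which is the last assertion. For the norms, the only nonzero pairings among the $e_i$ are $(e_a, e_{h-a+1}) = (-1)^{c+a}$; in particular $(e_i, e_i) = 0$ for $i \ne m+1$ and $(e_{m+1}, e_{m+1}) = (-1)^{c+m+1}$. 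Expanding, for $a \le m$ the squared norm of $v'_a$ equals its single cross term $2\cdot(-1)^{a-1}\frac{\gamma}{2}\cdot(-1)^{c+a} = \gamma\,(-1)^{c-1} = \gamma^2 = 1$; replacing $(-1)^{a-1}$ by $(-1)^a$ shows $(v'_{h-a+1},v'_{h-a+1}) = -1$; and $(v'_{m+1},v'_{m+1}) = (e_{m+1},e_{m+1}) = (-1)^{m+1+c}$. Finally I would check that distinct $v'$'s are orthogonal: the sets of basis indices occurring in $v'_a$ and $v'_b$ never produce an index sum equal to $h+1$ unless $v'_b = v'_{h-a+1}$, and in that remaining case the two contributions cancel because the coefficients of $e_{h-a+1}$ in $v'_a$ and $v'_{h-a+1}$ are $(-1)^{a-1}\frac{\gamma}{2}$ and $(-1)^a\frac{\gamma}{2}$, of opposite sign. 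This proves (i) and identifies $\{v'_1,\dots,v'_h\}$ as an orthogonal basis of the stated signature.

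\emph{Part (ii).} The rescaling is designed to turn every squared norm into $1$: for $a \le m$ nothing changes; for $m+2 \le j \le h$ one has $(\zeta v'_j, \zeta v'_j) = \zeta^2\cdot(-1) = 1$; and $\zeta$ is inserted into $v'_{m+1}$ exactly when $(v'_{m+1},v'_{m+1}) = (-1)^{m+1+c} = -1$, i.e.\ when $m+1+c$ is odd, again giving norm $1$. Hence $\{v_1,\dots,v_h\}$ is orthonormal. For the Frobenius action, the relation $\zeta^2 = -1$ forces $F(\zeta) = \zeta$ when $q \equiv 1\pmod 4$ and $F(\zeta) = -\zeta$ when $q \equiv -1\pmod 4$. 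Combining this with $F(v'_i) = v'_i$ from (i): in the first case every $v_i$ is $F$-fixed; in the second case $F(v_a) = v_a$ for $a \le m$, $F(v_j) = -v_j$ for $m+2 \le j \le h$, and $F(v_{m+1}) = \pm v_{m+1}$ according to whether the factor $\zeta$ was present, which is precisely the sign $(-1)^{m+1+c}$. This gives (ii).

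There is no conceptual obstacle here; the only thing to watch is the bookkeeping of the three interacting parities (those of $a$, of $m$, and of $c$) and the alternating signs in (3.3.1), so the main care is in writing the closed forms of $v'_a$ and $v'_{h-a+1}$ correctly before starting the computation.
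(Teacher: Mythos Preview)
Your proposal is correct and is precisely what the paper does: it states only that the lemma ``can be checked by a direct computation'' and gives no details, so your explicit verification of the norms, orthogonality, and Frobenius action is exactly the intended argument. The only minor imprecision is in your description of the cancellation in $(v'_a, v'_{h-a+1})$ --- the two contributions actually come from pairing $e_a$ in one vector with $e_{h-a+1}$ in the other and vice versa --- but the computation itself is correct.
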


\para{3.4.}
Let $V_j$ be as in 3.1 for $\la_j = h$ : odd. 
Write $h = 2m + 1$.  Then $\d_j = m + j$. By applying Lemma 3.3 
for $M = V_j$ with $c = m + j$, one can find an orthonormal basis
$v_1^j, \dots, v_h^j$ of $V_j$. 
Put $x_j = v_1^j\cdots v_h^j \in C(V)$.  
The action of $F$ on the basis $\{ v_1^j, \dots, v_h^j\}$ is given 
as in the lemma.  In particular, $F(x_j) = x_j$ if $q\equiv 1\pmod 4$.
While if $q \equiv -1\pmod 4$, we have
\begin{equation*}
\tag{3.4.1}
F(x_j) = (-1)^{m + 1 + \d_j}(-1)^mx_j = (-1)^{(\la_j-1)/2 + 1 + j}x_j.
\end{equation*}

\par
By the discussion in 2.6, $A_G(u)$ is isomorphic to the subgroup 
of the group of units in $C^+(V)$ generated by an even number of products
$x_j$ such that $\la_j$ is odd. Thus we have the following lemma.

\begin{lem}  
If $q\equiv 1\pmod 4$, then $F$ acts trivially on $A_G(u)$.
While if $q\equiv -1\pmod 4$, then $F$ acts non-trivially on $A_G(u)$. 
\end{lem}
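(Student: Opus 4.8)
The plan is to read off the action of $F$ on $A_G(u)$ from the description of this group in 2.6 together with the formula for $F(x_j)$ obtained in 3.4. First I would recall that, since $\la \in X_N$, the natural map $\vG \to A_G(u)$ is an isomorphism (2.6), where $\vG$ is the subgroup of the group of units of $C^+(V)$ generated by the products $x_jx_{j'}$ with $\la_j,\la_{j'}$ odd, and that $\ve$ is a non-trivial element of $\vG$. Since $F$ acts on $\vG$ through its multiplicative action on $C(V)$, one has $F(x_jx_{j'}) = F(x_j)F(x_{j'})$, so everything is controlled by the signs $\e_j \in \{1,-1\}$ defined by $F(x_j) = \e_j x_j$ for the indices $j$ with $\la_j$ odd.

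If $q \equiv 1 \pmod 4$, then $\z \in \BF_q$, and the formula of 3.4 (via Lemma 3.3) gives $\e_j = 1$ for all such $j$; hence $F$ fixes every generator of $\vG$ and so acts trivially on $A_G(u)$, which settles that case. If $q \equiv -1 \pmod 4$, then by (3.4.1) $\e_j = (-1)^{(\la_j-1)/2 + 1 + j}$, so $F(x_jx_{j'}) = \e_j\e_{j'}\,x_jx_{j'}$; if I can find a pair $j,j'$ with $\la_j,\la_{j'}$ odd and $\e_j \ne \e_{j'}$, then $F(x_jx_{j'}) = \ve\,x_jx_{j'} \ne x_jx_{j'}$ (because $\ve$ is non-trivial in $\vG$), so $F$ acts non-trivially on $A_G(u)$. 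The remaining task is therefore to produce such a pair, and for this I would use the combinatorics of $X_N$: the odd parts of $\la$ are pairwise distinct, and since every even part has even multiplicity there is an even number of parts lying strictly between two consecutive odd parts of $\la$ (written in increasing order), so the positions of two consecutive odd parts have opposite parity; weighing this against the residues modulo $4$ of the odd parts themselves in the exponent $(\la_j-1)/2 + j$ then yields a consecutive pair of odd parts with $\e_j \ne \e_{j'}$.

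The $q \equiv 1 \pmod 4$ case is immediate once the reduction to the generators $x_j$ is in place. The main obstacle is the last step of the $q \equiv -1$ case, namely exhibiting two odd parts of $\la$ with opposite signs $\e_j$: this is elementary but a little fiddly, since the argument must keep track simultaneously of the positions of the odd parts of $\la$ and of their residues modulo $4$, and it is precisely here that the structure of $X_N$ is used.
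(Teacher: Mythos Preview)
Your reduction to the signs $\e_j$ via (3.4.1) and the identification $A_G(u)\simeq\vG$ is exactly the paper's (implicit) argument, and your treatment of the case $q\equiv 1\pmod 4$ is correct.

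The gap is in the $q\equiv -1\pmod 4$ case. You correctly observe that consecutive odd parts of $\la$ occupy positions of opposite parity (since the even parts between them come in pairs). But then, for consecutive odd parts at positions $j<j'$, one has $\e_j\ne\e_{j'}$ if and only if $(\la_j-1)/2+j\not\equiv(\la_{j'}-1)/2+j'\pmod 2$; since $j\not\equiv j'\pmod 2$, this is equivalent to $(\la_j-1)/2\equiv(\la_{j'}-1)/2\pmod 2$, i.e.\ to $\la_j\equiv\la_{j'}\pmod 4$. So what you need is a consecutive pair of odd parts lying in the \emph{same} residue class modulo $4$, and this is simply not guaranteed by the structure of $X_N$. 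For instance, take $\la=(1,3)\in X_4$: then $\e_1=(-1)^{0+1+1}=1$ and $\e_2=(-1)^{1+1+2}=1$, so $F(x_1x_2)=x_1x_2$ and $F$ acts trivially on $\vG$. The same happens for $\la=(1,3,5)\in X_9$, where $\e_1=\e_2=\e_3=1$. Thus the combinatorial step you describe as ``a little fiddly'' actually fails, and no amount of bookkeeping will rescue it.

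In fact these examples show that the second assertion of the lemma is not literally true for every $\la\in X_N$: when $q\equiv -1\pmod 4$ the action of $F$ on $A_G(u)$ is governed by the signs $\e_j$ of (3.4.1) and \emph{may} be non-trivial, but is not always so. The paper's one-line ``proof'' does not address this, and the subsequent arguments (3.7, Lemma 3.8, and the proof of Theorem 3.13) are organised as a case distinction on whether $F$ acts trivially on $A_G(u)$ or not, so they do not rely on the dichotomy being controlled purely by $q\pmod 4$. What you can legitimately extract from (3.4.1) is: if $q\equiv 1\pmod 4$ then $F$ acts trivially; if $q\equiv -1\pmod 4$ then $F(x_jx_{j'})=\e_j\e_{j'}\,x_jx_{j'}$, and the action is non-trivial precisely when the $\e_j$ are not all equal.
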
 

\para{3.6.}
We apply the restriction formula (1.7.4) for the following case.
Let $M$ be the Levi subgroup of 
a parabolic subgroup $Q$ of $G$ such that $\b(M) \simeq SL_2 \times SO_{N-4}$. 
Take $(u,\r) \lra (C,\SE) \in (\SN_G)_{\x}$, and 
$(u',\r') \lra (C',\SE') \in (\SN_M)_{\x}$.  
Then $u'$ satisfies the properties  
\par\medskip\noindent
(i) \ The projection of $\b(u')$ on $SL_2$ is regular unipotent.
\\
(ii) \ The projection of $\b(u')$ on $SO_{N-4}$ corresponds 
to the unipotent elements of Jordan type in $X_{N-4}$ under the 
correspondence $(\SN_{\Spin_{N-4}})_{\x} \simeq X_{N-4}$. 
\par\medskip
Put $\b(M) = \ol M$, $\b(Q) = \ol Q$.  
Let $p : \ol Q \to \ol M$ be the natural projection. 
Put $\la = \la(u) \in X_N$ and $\la(u') = \la' \in X_{N-4}$. 
Assume that $\r \otimes \r'^*$ appears in $\ve_{u,u'}$. 
Then it is known by [LS, Lemma 4.5, Lemma 4.8], 
only the following 5 cases occur; 
there exists an integer $i$ such that one of the following conditions 
is satisfied.
\par\medskip
(I)  $\la_i$ is odd, $\la_i > \la_{i-1} + 4$, and 
\begin{equation*}
\la_j' = \begin{cases}
             \la_j - 4  &\quad\text{ if $j = i$, } \\
              \la_j     &\quad\text{ otherwise. }
         \end{cases}
\end{equation*}

(II) $\la_i = \la_{i+1} \ge \la_{i-1} + 2$ and 
\begin{equation*}
\la_j' = \begin{cases}
            \la_j-2 &\quad\text{ if $j = i, i+1$, }  \\
            \la_j   &\quad\text{ otherwise.} 
         \end{cases}
\end{equation*}

(III) $\la_i = \la_{i+1} \ge \la_{i-1}+ 4$ and
\begin{equation*}
\la_j' = \begin{cases}
            \la_j - 3  &\quad\text{ if $j = i$, }  \\
            \la_j - 1  &\quad\text{ if $j = i+1$, } \\
            \la_j      &\quad\text{ otherwise. }
         \end{cases}
\end{equation*}

(IV) $\la_{i+1} - 2 = \la_i \ge \la_{i-1} + 1$ and 
\begin{equation*}
\la'_j = \begin{cases}
           \la_j - 1 &\quad\text{ if $j = i$, }  \\
           \la_j - 3 &\quad\text{ if $j = i+1$, } \\
           \la_j     &\quad\text{ otherwise. }  
         \end{cases}
\end{equation*}

(V) $\la_{i+2} = \la_{i+1} = \la_i + 1$ and
\begin{equation*}
\la'_j = \begin{cases}
            \la_j - 1 &\quad\text{ if $j = i, i+2$, } \\
            \la_j - 2 &\quad\text{ if $j = i+1$, }  \\
            \la_j     &\quad\text{ otherwise. } 
         \end{cases} 
\end{equation*}

\para{3.7.}
Assume that $(u, \r) \lra (C, \SE) \in (\SN_G)_{\x}$.  We choose 
a split element $u \in C^F$, and assume that $F$ acts non-trivially on 
$A_G(u)$.  Since $F^2$ acts trivially on $A_G(u)$ (see 3.2), 
the order of $\tau$ is equal to 2.  There exist two extensions $\wt\r, \wt\r'$ 
of $\r$ to $\wt A_G(u)$.  
We need to fix an appropriate extension $\wt\r$ of $\r$,
called the split extension of $\r$.
Assume given $(u',\r') \in (\SN_M)_{\x}$ as in 3.6, and choose 
a split element $u' \in C'^F$.  
Recall that $(u_0, \r_0) \lra (C_0, \SE_0)$ for the cuspidal pair in 
$L$.  We assume that 
the split extensions $\wt\r_0$ of $\r_0$ and $\wt\r'$ of $\r'$ are
already determined.  We consider $\wt\ve_{u, u'}$ and its subspace 
$\r \otimes \r'^*$.  
$\tau'$ acts on $\wt\r'$ via $T'$. 
If we fix an extension $\wt\r$ of $\r$, where $\tau$ acts as $T$ on $\wt\r$, 
then $\tau_*$ act as $T_* = T\otimes T'$ with $T_*^2 = 1$. $\tau$ acts as $-T$ for $\wt\r'$.
Since $\r \otimes \r'^*$ is 
$F$-stable, the action of $F$ on $\r\otimes \r'^*$ coincides with a scalar 
times $T_*$.  If we can show that $F^2$ acts trivially on $\r \otimes \r'^*$, 
then by replacing $\wt\r$ by $\wt\r'$
if necessary, we can find an extension $\wt\r$ of $\r$ such that 
$F$ acts as $T_*$ on $\wt{\r \otimes \r'^*}$.  
We call such $\wt\r$ the split extension of $\r$. 
By (1.8.2), we have 

\begin{lem} 
Assume that $F$ acts non-trivially on $A_G(u)$.  If 
$F^2$ acts trivially on the subspace $\r\otimes \r'^*$ of $\ve_{u,u'}$, 
then for the split extension $\wt\r$ 
of $\r$, we have $\g = 1$. 
\end{lem}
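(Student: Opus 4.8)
The plan is to reduce to the second half of (1.8.2): it is enough to show that $F$ acts trivially on $\SW_L$ and that $\s_{(u,\r)}$ is $q^{(a_0+r)/2}$ times the identity on $V_{(u,\r)}$. By 2.3, for $\x(\ve)=-1$ the relative Weyl group $\SW_L$ is of type $B_n$ with $n=(N-d(2d-1))/4$, and since $F$ is split (3.1) it acts trivially on $\SW_L$; in particular the Coxeter automorphism $\s$ induced by $F$ is the identity, so $\s_{(u,\r)}$ is an endomorphism of the irreducible $\SW_L$-module $V_{(u,\r)}$ and hence equals a scalar $\m\in\Ql^*$ by Schur's lemma. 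It remains to prove $\m=q^{(a_0+r)/2}$.

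I would establish this by induction on $N$, the case $\SW_L=1$ being clear. Choose $(u',\r')$ from one of the five cases 3.6(I)--(V), so that $\r\otimes\r'^*$ occurs in $\ve_{u,u'}$; then $M_{\r,\r'}\ne0$, and since branching from type $B_n$ to type $B_{n-1}$ is multiplicity-free, $\dim M_{\r,\r'}=1$. As $L\subset M\subset G$ with $(L,C_0,\SE_0)$ the common series and $\b(M)\simeq SL_2\times SO_{N-4}$, the inductive hypothesis applied to the $\Spin_{N-4}$-part of $M$ (the $SL_2$-factor being handled as in Remark 2.7) gives $\s_{(u',\r')}=q^{(a_0'+r')/2}\cdot\id$ on $V_{(u',\r')}$, where $a_0',r'$ are the integers (1.3.1) computed in $M$. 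Decomposing $V_{(u,\r)}=\bigoplus_{\r'}M_{\r,\r'}\otimes V_{(u',\r')}$ as in 1.8, on which $\s_{(u,\r)}$ restricts to $\s_{\r,\r'}\otimes\s_{(u',\r')}$, we read off $\s_{\r,\r'}=\m\, q^{-(a_0'+r')/2}$ on the line $M_{\r,\r'}$, so that (1.8.1) becomes
\begin{equation*}
\m\, q^{-(a_0'+r')/2}=q^{-(\dim C-\dim C')/2+\dim U_Q}\,\lp\wt\ve_{u,u'},\wt{\r\otimes\r'^*}\rp_{A(u,u')\t_*}.
\end{equation*}

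Now I bring in the hypothesis. Since $F^2$ acts trivially on $A_G(u)$ and on $A_M(u')$ (3.2), it acts trivially on $A(u,u')$, so on the $(\r\otimes\r'^*)$-isotypic line of $\ve_{u,u'}$ the operator $F$ equals $\w$ times the action of $\t_*$ on the extension $\wt{\r\otimes\r'^*}$ determined by $\wt\r$ and $\wt\r'$, where $\w$ satisfies that $\w^2$ is the scalar by which $F^2$ acts on $\r\otimes\r'^*$ (a scalar, by Schur). The hypothesis that $F^2$ is trivial on $\r\otimes\r'^*$ forces $\w=\pm1$; replacing $\wt\r$ by the other extension negates $\w$, so we may and do choose $\wt\r$ with $\w=1$, which is exactly the split extension of 3.7. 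For that choice $\wt\ve_{u,u'}$ restricts to $\wt{\r\otimes\r'^*}$ on the isotypic line, hence $\lp\wt\ve_{u,u'},\wt{\r\otimes\r'^*}\rp_{A(u,u')\t_*}=1$ by the orthogonality relations on the coset $A(u,u')\t_*$, and therefore $\m=q^{(a_0'+r')/2-(\dim C-\dim C')/2+\dim U_Q}$. Since $(L,C_0,\SE_0)$ is the common series, (1.3.2) gives $a_0+r-(a_0'+r')=(\dim G-\dim M)-(\dim C-\dim C')=2\dim U_Q-(\dim C-\dim C')$, so $\m=q^{(a_0+r)/2}$ and (1.8.2) yields $\g=1$. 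The main obstacle is the inductive input for $M$: one must know $\s_{(u',\r')}=q^{(a_0'+r')/2}\id$ for the already-fixed split extension $\wt\r'$, and must check that the cases 3.6(I)--(V) genuinely supply a $\r'$ with $M_{\r,\r'}\ne0$; after that the degree bookkeeping via (1.3.2) and $\dim G-\dim M=2\dim U_Q$ is routine.
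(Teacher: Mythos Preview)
Your proof is correct and is essentially the paper's own argument made explicit: the paper simply writes ``By (1.8.2), we have'' before stating the lemma, leaving implicit the inductive use of (1.8.1), the multiplicity-free branching $\dim M_{\r,\r'}=1$ for $\SW_L'\subset \SW_L$ of type $B_{n-1}\subset B_n$, the fact that the split extension forces $\lp\wt\ve_{u,u'},\wt{\r\otimes\r'^*}\rp_{A(u,u')\t_*}=1$, and the dimension bookkeeping via (1.3.2) that you carry out. Your caveat at the end is exactly right: the inductive input $\s_{(u',\r')}=q^{(a_0'+r')/2}\cdot\id$ for the already-chosen split $u'\in C'^F$ and extension $\wt\r'$ is precisely what the paper secures in the surrounding proof of Theorem~3.13 (via Proposition~3.11 applied to $F^2$ in the non-trivial case), so Lemma~3.8 is really a step inside that induction rather than a free-standing statement.
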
  

\para{3.9.}
Recall that
$Y_{u,u'} = \{ g \in G \mid g\iv ug \in u'U_Q\}$, and 
$C'$ the unipotent class in $M$ containing $u'$.  
Put 
\begin{equation*}
\tag{3.9.1}
\CQ_{u,C'} = \{ gQ \in G/Q \mid g\iv ug \in C'U_Q\}.
\end{equation*} 
We have a natural map $q: Y_{u,u'} \to \CQ_{u,C'}$. $Z_G(u)$ acts on 
$\CQ_{u, C'}$ from the left, and the map $q$ is $Z_G(u)$-equivariant.
$q$ induces an isomorphism 
\begin{equation*}
\tag{3.9.2}
Y_{u,u'}/Z_M(u')U_Q \isom \CQ_{u,C'}.
\end{equation*}

If $u \in C^F, u' \in C'^F$, then $Y_{u,u'}, \CQ_{u,C'}$ are defined over 
$\BF_q$, and the map $q$ is $F$-equivariant. 
Put $\ol Q = \b(Q)$.  We have $\b(Y_{u,u'}) = \ol Y_{\ol u,\ol u'}$, which 
is the variety defined with respect to $\ol G$ and $\ol M$.  Then 
$\b : Y_{u,u'} \to \ol Y_{\ol u,\ol u'}$ gives a double covering. 
The variety $\ol\CQ_{\ol u, \ol C'}$ is defined similarly to $\CQ_{u,C'}$,
and a formula analogous to (3.9.2) holds for $\ol Y_{\ol u,\ol u'}$.
We show a lemma

\begin{lem}  
Assume that $F$ acts trivially on $A_G(u)$ and on $A_M(u')$. 
Assume that $\ol\CQ_{\ol u,\ol C'}$ has finitely many $Z_{\ol G}(\ol u)$-orbits, 
and each $Z_{\ol G}(\ol u)$-orbit contains an element $\ol y \ol Q$
such that $\ol y \in \ol Y^F_{\ol u,\ol u'}$. Then 
$F$ acts trivially on $\ve_{u,u'}$.  
\end{lem}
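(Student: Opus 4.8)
The plan is to exploit the fibration $q\colon Y_{u,u'}\to\CQ_{u,C'}$ of (3.9.2) and to reduce the assertion to two separate statements, one about $\CQ_{u,C'}$ and one about the fibres of $q$. By (3.9.2), $q$ makes $Y_{u,u'}$ a torsor over $\CQ_{u,C'}$ under $H:=Z_M(u')U_Q$, with $H^0=Z_M^0(u')U_Q$ connected and $H/H^0\simeq A_M(u')$; in particular the fibres of $q$ have constant dimension $\dim H=\dim Z_M(u')+\dim U_Q$. A dimension count using (1.7.2) shows that the members of $X_{u,u'}$ are precisely the irreducible components of the varieties $q^{-1}(\bar Z)$, where $\bar Z$ runs over the top‑dimensional irreducible components of $\CQ_{u,C'}$; and, for a fixed (irreducible) $\bar Z$, since $q^{-1}(\bar Z)\to q^{-1}(\bar Z)/H^0$ is an $H^0$‑torsor with $H^0$ connected, the components of $q^{-1}(\bar Z)$ are in $F$‑equivariant bijection with the components of the finite étale $A_M(u')$‑cover $q^{-1}(\bar Z)/H^0\to\bar Z$. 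Thus $X_{u,u'}$ is, $F$‑equivariantly, the disjoint union over such $\bar Z$ of these sets of components, and it suffices to prove: (A) $F$ fixes every component $\bar Z$ of $\CQ_{u,C'}$; and (B) for each such $\bar Z$, $F$ fixes every component of $q^{-1}(\bar Z)$.

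For (A), I would note that $\Ker\b=\{1,\ve\}\subset Q$, so $\b$ induces an isomorphism $G/Q\isom\ol G/\ol Q$; since any $G$‑conjugate of $u$ is unipotent it lands in $C'U_Q$ and not in $\ve C'U_Q$, and one obtains an $F$‑equivariant isomorphism $\CQ_{u,C'}\isom\ol\CQ_{\ol u,\ol C'}$ which is equivariant for $Z_G(u)$ acting through the surjection $Z_G(u)\to Z_{\ol G}(\ol u)$. Hence $\CQ_{u,C'}$ has finitely many $Z_G(u)$‑orbits, and each of them contains an $F$‑fixed point, namely (the image of) $\ol y\ol Q$ with $\ol y\in\ol Y^F_{\ol u,\ol u'}$ provided by the hypothesis; so each orbit is $F$‑stable. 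As there are finitely many orbits, $\CQ_{u,C'}$ is a finite union of irreducible $Z_G^0(u)$‑orbits, each of its components being the closure of one of these; and an $F$‑stable $Z_G(u)$‑orbit with an $F$‑fixed point has all its $Z_G^0(u)$‑components $F$‑stable because $F$ acts trivially on $A_{\ol G}(\ol u)$, a quotient of $A_G(u)$ on which $F$ is trivial by hypothesis. Hence every component $\bar Z$ of $\CQ_{u,C'}$ is $F$‑stable.

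For (B), fix a component $\bar Z$ of $\CQ_{u,C'}$, let $\CO\subset\bar Z$ be its dense $Z_G(u)$‑orbit, and use that $q^{-1}(\CO)\to\CO$ is an $H$‑torsor and $q^{-1}(\CO)$ is dense open in $q^{-1}(\bar Z)$; so it is enough to show $F$ fixes each component of $q^{-1}(\CO)$. Take the $F$‑fixed point $\ol y\ol Q\in\ol\CO$ with $\ol y\in\ol Y^F_{\ol u,\ol u'}$ supplied by the hypothesis, let $gQ\in\CO$ be the corresponding $F$‑fixed point of $\CQ_{u,C'}$, put $S=Z_G(u)\cap gQg\iv$, so that $\CO\cong Z_G(u)/S$ and $q^{-1}(\CO)\cong Z_G(u)\times^{S}q^{-1}(gQ)$, where $q^{-1}(gQ)=gH$ is a right $H$‑torsor. \emph{The crucial point is that the hypothesis provides $\ol y$ as a rational point of $\ol Y_{\ol u,\ol u'}$ itself, not merely a rational point $\ol y\ol Q$ of $\ol\CQ_{\ol u,\ol C'}$: this lets one lift $\ol y$, through the double cover $\b\colon Y_{u,u'}\to\ol Y_{\ol u,\ol u'}$, to a point $g\in Y_{u,u'}$ over $gQ$ with $F(g)=g$.} Once such a rational $g$ is fixed, the identification $q^{-1}(\CO)\cong Z_G(u)\times^{S}gH$ is $F$‑equivariant and $F$ acts on the finite sets $\pi_0(Z_G(u))=A_G(u)$ and $\pi_0(gH)\simeq A_M(u')$ through its given actions, both trivial by hypothesis; it follows that $F$ acts trivially on $\pi_0\bigl(q^{-1}(\CO)\bigr)$, giving (B). Combined with (A) this shows $F$ acts trivially on $X_{u,u'}$, hence on $\ve_{u,u'}$.

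The step I expect to be the real obstacle is the one emphasized in (B): controlling the central element $\ve$ along $\b\colon Y_{u,u'}\to\ol Y_{\ol u,\ol u'}$. A priori $F(g)\in\{g,\ve g\}$, and if $F(g)=\ve g$ then, since $\ve$ acts on $q^{-1}(\CO)$ as an element of $A_G(u)$ and $F$ commutes with the $A_G(u)$‑action, $F$ would act on the components over $\bar Z$ as the nontrivial class of $\ve$ in $A_G(u)$ — which is precisely what must be excluded. Ruling this out is exactly where one must use that $\ol y$ is available as a rational point of $\ol Y_{\ol u,\ol u'}$ together with the $F$‑triviality of the relevant component groups; everything else — the dimension count, the orbit‑closure description of the components, and the transport of $F$‑triviality through the associated‑bundle description — is routine once this lifting is secured.
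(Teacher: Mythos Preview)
Your overall structure --- fibering $Y_{u,u'}$ over $\CQ_{u,C'}\simeq\ol\CQ_{\ol u,\ol C'}$ via $q$ and handling base and fibre separately --- is the same as the paper's, and your step (A) is fine. The real content is, as you correctly isolate, in (B), but your proposed resolution there does not work. Having $\ol y\in\ol Y^F_{\ol u,\ol u'}$ together with $F$ trivial on $A_G(u)$ and $A_M(u')$ does \emph{not} exclude $F(g_0)=\ve g_0$ for $g_0\in\b^{-1}(\ol y)$. Since $F$ acts trivially on $\ker\b=\{1,\ve\}$, the map $G^F\to\ol G^F$ has cokernel of order~$2$, so a rational $\ol y$ need not lift to $G^F$ at all; and $F$-triviality on $A_G(u)$ only says that the class of $\ve$ is $F$-fixed, which is perfectly consistent with $F$ permuting the components over a given orbit by multiplication by that very class. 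So the obstacle you flag is genuine, but the tools you invoke cannot remove it.

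The paper's device is to avoid the \'etale double cover $\b$ altogether and instead lift the \emph{flag}. Since $Q$ is connected, Lang--Steinberg for $Q$ produces $y\in G^F$ with $yQ$ corresponding to $\ol y\ol Q$ under $G/Q\simeq\ol G/\ol Q$. Because conjugates of $u$ are unipotent while $\ve\cdot(\text{unipotent})$ is never unipotent ($p$ is odd here), one finds $y^{-1}uy\in u'_1U_Q$ with $u'_1\in C'^F$ and $\ol u'_1$ in the $\ol M^F$-class of $\ol u'$; hence $u'_1$ is again a split element and one may replace $u'$ by $u'_1$. With this $F$-rational $y$ in hand, the preimage in $Y_{u,u'}$ of the $Z_{\ol G}(\ol u)$-orbit $R$ is $Z_G(u)\,y\,Z_M(u')U_Q$ (the sign $\ve$ being absorbed into $Z_G(u)$), and its irreducible components $Z_G^0(u)x_\a\,y\,x'_\b\,Z_M^0(u')U_Q$ are visibly $F$-stable since $y\in G^F$ and $F$ fixes every class in $A_G(u)\times A_M(u')$. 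That Lang-for-$Q$ step, rather than any appeal to component-group triviality alone, is the missing idea.
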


\begin{proof}
Let $\ol y\ol Q \in \ol\CQ_{\ol u,\ol C'}$ be such that $\ol y \in \ol Y^F_{\ol u,\ol u'}$.
Then there exists $y \in G^F$ such that $yQ = \ol y\ol Q$ and that 
$y\iv uy \in u_1'U_Q$ for $u_1' \in C'^F$. This implies that 
$\ol y\iv \ol u \ol y \in \ol u_1'U_{\ol Q}$, and $\ol u_1'$ is $\ol M^F$-conjugate 
to $\ol u'$.  Hence $\ol u_1'$ is a split element in $\ol C'$, and so 
$u'_1$ is a split element in $C'$. 
We have $y \in Y^F_{u, u'_1}$, and $\ol y \in \ol Y^F_{\ol u, \ol u_1'}$.  
By replacing $u'$ by $u_1'$ if necessary, we may assume that $y \in Y^F_{u,u'}$. 
Let $R$ be the $Z_{\ol G}(\ol u)$-orbit of $\ol y \ol Q$ in $\ol\CQ_{\ol u,\ol C'}$.  
Then 
$q\iv R = Z_{\ol G}(\ol u)\ol y Z_{\ol M}(\ol u')U_{\ol Q}$.
Since $\ve = -1 \in Z_G(u)$, we can write as 
$\b\iv(q\iv R) = Z_G(u)yZ_M(u')U_Q$, with $y \in Y^F_{u,u'}$.
Then 
\begin{equation*}
\b\iv(q\iv R) = \bigsqcup_{\a \in A_G(u), \b \in A_M(u')}
                 Z_G^0(u)x_{\a}yx'_{\b}Z_M^0(u')U_Q,
\end{equation*}
where $x_{\a}, x'_{\b}$ are representatives of $\a, \b$ in $Z_G(u)$, $Z_M(u')$ 
respectively.  
Hence the irreducible components of $\b\iv( q\iv R)$ are given by 
$Z_G^0(u)x_{\a}yx'_{\b}Z_M(u')$.
Since $F$ acts trivially on $A_G(u), A_M(u')$, all those irreducible components 
are $F$-stable.  Thus $F$ acts trivially on $\ve_{u,u'}$.
\end{proof}

\begin{prop}  
Lt $u \in C, u' \in C'$ with $\la(u) = \la$, $\la'= \la(u')$.
Assume that the type of $(\la, \la')$ is given as in {\rm (I), (II), (IV)} 
or {\rm (V)} 
in 3.6. Let $u \in C^F$ be a split element such that $F$ acts trivially on 
$A_G(u)$.  Then there exists 
a split element $u' \in C'^F$ such that $F$ acts trivially on $\ve_{u,u'}$. 
(For the case {\rm (III)}, see Remark 3.13 below.) 
\end{prop}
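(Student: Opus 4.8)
The plan is to deduce the proposition from Lemma 3.10, so the argument falls into two parts: first arrange that $F$ acts trivially on $A_M(u')$ for a suitable split $u'$, and then verify the geometric hypothesis on $\ol\CQ_{\ol u,\ol C'}$.

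For the first part, note that by Lemma 3.5 the assumption that $F$ acts trivially on $A_G(u)$ forces $q \equiv 1 \pmod 4$. Choose $u' \in C'^F$ so that $\b(u')$ is the product of a regular unipotent element of $SL_2$, lying in the $\BF_q$-rational regular unipotent class, with a split element of $SO_{N-4}$ in the sense of 3.1. Then $A_M(u')$ is an extension whose $SL_2$-part is the component group of the regular unipotent class of $SL_2$, a group of order two on which $F$ acts trivially since $SL_2$ is $F$-split, and whose spin part is built, as in 2.6, from the generators $x_j$ attached to the odd rows of $\la'$; by the analogue of Lemma 3.5 for $\Spin_{N-4}$, together with $q \equiv 1 \pmod 4$ and (3.4.1), $F$ acts trivially on that part as well. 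Hence $F$ acts trivially on $A_M(u')$, and the first two hypotheses of Lemma 3.10 are met.

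It remains to show that $\ol\CQ_{\ol u,\ol C'}$ has finitely many $Z_{\ol G}(\ol u)$-orbits and that each such orbit contains a coset $\ol y\ol Q$ with $\ol y \in \ol Y^F_{\ol u,\ol u'}$; by the proof of Lemma 3.10 we are moreover free to replace $\ol u'$ by any split element in its $\ol M^F$-class as we go. I would argue entirely inside $\ol G = SO(V)$ with the model of 3.1. If $W$ denotes the $2$-dimensional isotropic subspace of $V$ stabilized by $\ol Q$, then a coset $\ol g\ol Q \in \ol\CQ_{\ol u,\ol C'}$ is the same datum as a $2$-dimensional isotropic subspace $W' = \ol g W$ which is $\ol u$-stable, on which $\ol u$ acts as a single Jordan block, and such that $\ol u$ induces on $(W')^{\perp}/W'$ a unipotent element of type $\la'$ in the split $SO((W')^{\perp}/W')$-class. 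In each of the cases (I), (II), (IV), (V) the constraints on $(\la, \la')$ pin $W'$ down to finitely many possibilities, describable explicitly in the basis $\{e^j_a\}$ of 3.1 --- for instance in case (I) one is forced to take $W' = \ker(\ol u - 1)^2 \cap V_i$, and in the remaining cases $W'$ lives inside the one or two even-length blocks being modified --- which is precisely the information encoded by [LS, \S4] and yields the finiteness of the orbit set. For rationality, since $q \equiv 1 \pmod 4$ the element $\z$ with $\z^2 = -1$, and hence the orthonormalizing combinations of Lemma 3.3, are all $F$-fixed, so the subspaces $W'$ exhibited in the $e^j_a$-basis are $F$-stable; one then checks, using the forms $f_j$, that $\ol g\iv\ol u\ol g$ lies in $\ol u'U_{\ol Q}$ for the split $\ol u'$ read off from the same basis, and that carrying this construction around the finitely many $Z_{\ol G}(\ol u)$-orbits produces an $F$-rational $\ol y$ in each. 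Lemma 3.10 then gives that $F$ acts trivially on $\ve_{u,u'}$.

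The principal obstacle is this last step: showing that for each of (I), (II), (IV), (V) the admissible subspaces $W'$ form finitely many $Z_{\ol G}(\ol u)$-orbits and that every orbit carries an $F$-rational representative demands careful bookkeeping with the forms $f_j$ of 3.1 --- the parities of the signs $(-1)^{\d_j - a}$, and in cases (II), (IV), (V) the coupling of the two even-length blocks --- together with tracking how the split normalization for $SO_{N-4}$ embeds into that for $SO_N$; here the point is that the relevant $\d$-parameters shift by an even amount, which is what keeps all signs in agreement. Case (III) is excluded because in that transition $\la \to \la'$ two new odd rows are created, so the naive choice of $W'$ is no longer compatible with the split normalization; that case is treated separately in Remark 3.13.
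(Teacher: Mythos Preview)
Your strategy --- reduce to Lemma 3.10, identify $\ol\CQ_{\ol u,\ol C'}$ with $\ol u$-stable isotropic $2$-planes in $V$, and check the hypotheses case by case --- is exactly the paper's. But the proposal is not a proof: you explicitly call the case analysis ``the principal obstacle'' and do not carry it out, and that analysis is where all the content lies.

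Several of the claims in your sketch are also inaccurate. In case (IV) the two blocks $\la_i,\la_{i+1}$ are both \emph{odd} (they differ by $2$, and $\la'_i,\la'_{i+1}$ must be even for $\la'\in X_{N-4}$), not even; in case (V) one block is odd and two are even. More seriously, your assertion that the constraints ``pin $W'$ down to finitely many possibilities'' is false in cases (II) and (V): the paper finds genuine one-parameter families $\lp e_1, e_2 + \a f_1\rp$ (case II) and $E_{\b}$ (case V), and the work is to show each family is a \emph{single} $Z_{\ol G}(\ol u)$-orbit with an $F$-rational representative. In case (IV) the two admissible planes are $\lp e_1, e_2 \pm e'_1\rp$, and their $F$-stability comes from the sign identity $(e_2,e_{h-1}) = -(e'_1,e'_{h-2})$, which in turn rests on $\d_i = \d_{i+1}$; in case (V) one must further verify that the resulting $u'$ is split by comparing $\d_i$ with $\d'_{i+2}$ after reordering the parts. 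None of this bookkeeping is present in your proposal.

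Your treatment of $A_M(u')$ is also loose. Only $\ol M$ is a product $SL_2 \times SO_{N-4}$; the Levi $M$ itself is not, so $A_M(u')$ is not simply the product of an ``$SL_2$-part'' and a ``spin part''. The paper sidesteps this by comparing $A_M(u')$ directly with $A_G(u)$ in each case --- they are isomorphic with compatible $F$-action in (I), (II), (V), and in (IV) $A_M(u')$ is obtained from $A_G(u)$ by deleting the generators $x_i,x_{i+1}$ --- so the triviality of the $F$-action on $A_M(u')$ is inherited from the hypothesis on $A_G(u)$. Your detour through $q\equiv 1\pmod 4$ and Lemma 3.5 is a reasonable alternative starting point, but it still requires a correct structural description of $A_M(u')$ to conclude.
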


\begin{proof}
We can identify $\CQ = G/Q$ with the variety $\SF$ of all 2-dimensional 
totally isotropic subspaces of $V$.
Then $\CQ_{u,C'}$ can be identified with the subvariety $\SF_{u,C'}$ of $\SF$
consisting of all $E \in \SF$ such that 
\begin{enumerate}
\item 
$E$ is $\ol u$-stable.
\item
$\ol u|_E \ne 1$.
\item 
The Jordan type of $\ol u|_{E^{\perp}/E}$ is $\la'$. 
\end{enumerate}  

Let $u \in C^F$ be a split element such that $F$ acts trivially on $A_G(u)$.  
In each case of (I), (II), (IV) or (V) in 3.6, we shall find a split element 
$u' \in C'^F$ satisfying the assumption in Lemma 3.10.  Then the proposition follows 
from the lemma. 
\par\medskip\noindent
{\bf Case I.} 
\ Let $V_i$ be the subspace of $V$ corresponding to $\la_i$, 
and $e_1, \dots, e_h$ 
be the $F$-stable basis of $V_i$ (with $h = \la_i$) such that 
$xe_a = e_{a-1}$ for $x \in \Fg^F$ as in 3.1 (a).
Put $E = \lp e_1, e_2\rp$.  It is easy to see that 
$\SF_{u,C'} = \{ E \}$. 
$SL(E) \times SO(E^{\perp}/E)$ is $\ol G^F$-conjugate to $\ol M$. 
Thus there exists $\ol u' \in \ol M^F$ such that 
$\ol u'$ is $\ol G^F$-conjugate to $(\ol u|_E, \ol u|_{E^{\perp}/E})$.
Then $u' = \b\iv(\ol u')$ is a split element in $C'^F$. 
Since $A_G(u) \simeq A_M(u')$ with $F$-action, $F$ acts trivially on 
$A_M(u')$.  
One can choose $\ol y \in \ol G^F$ such that $\ol y\ol Q$ corresponds to $E$.  
Then $\ol y \in \ol Y^F_{\ol u,\ol u'}$. 
Thus the assumption of Lemma 3.10 is satisfied. 
\par\medskip\noindent
{\bf Case II. }
Let $V_i$ be as in 3.1 (b).  We write the basis 
$e^i_1, \dots, e^i_h, e_1^{i+1}, \dots, e_h^{i+1}$ of $V_i$ as
$e_1, \dots, e_h, f_1, \dots, f_h$, where $h = \la_i = \la_{i+1}$.  
Under the decomposition 
$V = \bigoplus_kV_k$ in 3.1, 
we define a subspace $M_i$ of $V$ as the direct sum of $V_k$ such that 
$\la_k = h$.  Let $x \in \Fg$ be as in 3.1.  Take 
$E \in \SF_{u,C'}$.  Then $\dim  (E \cap \Ker x) = 1$.
We choose $0 \ne v \in E \cap \Ker x$.  $v$ is $G_1$-conjugate to $e_1 \in V_i$, 
where $G_1$ is a subgroup of $Z_{\ol G}(\ol u)$ which is isomorphic to $Sp(M_i)$.
So assume that $e_1 \in E \cap \Ker x$.  Then the condition on $\la'$ implies that
$E \subset V_i$.  We see that $E$ is of the form 
$E = \lp e_1, e_2 + \a f_1\rp$ with $\a \in \Bk$. 
It follows that 
\begin{equation*}
\tag{3.11.1}
\SF_{u,C'} = \bigcup_{g \in G_1}g\{ \lp e_1, e_2 + \a f_1 \rp \mid \a \in \Bk \}.
\end{equation*}  
We choose $E = \lp e_1, e_2 \rp$.  Then $E  \in \SF^F_{u,C'}$.
There exists $\ol u' \in \ol M^F$ such that $\ol u'$ is $\ol G^F$-conjugate to
$(\ol u|_E, \ol u|_{E^{\perp}/E})$.  Then $u' = \b\iv(\ol u') \in C'^F$
is a split element.  Since $A_G(u) \simeq A_M(u')$ with $F$-action, $F$ acts trivially 
on $A_M(u')$.    
It is easy to see that $\lp e_1, e_2 + \a f_1\rp$ is $Z_{\ol G}(\ol u)$-conjugate 
to $E$.  Thus $\SF_{u,C'}$ coincides with the $Z_{\ol G}(\ol u)$-orbit of $E$. 
There exists $\ol y \in \ol Y_{\ol u,\ol u'}^F$ such that $\ol y\ol Q$ corresponds to $E$.
The assumption of Lemma 3.10 is satisfied. 
\par\medskip\noindent
{\bf Case IV.} \
Put $M_i = V_i \oplus V_{i+1}$, where $\dim V_{i+1} = h $ and $\dim V_i = h-2$.
Write the bases of $V_{i+1}, V_i$ in 3.1 as 
$e_1, \dots, e_h$ for $V_{i+1}$, and $e_1', \dots, e'_{h-2}$ for $V_i$.
Take $E \subset \SF_{u,C'}$.  
Then $E$ must be in $M_i$.  
Take $0 \ne v \in M_i \cap \Ker x$.  The condition for $\la'$
implies that $v = e_1$ up to scalar. 
Thus we may assume that $v = e_1$. 
$E$ can be written as $E = \lp e_1, e_2 + \a e_1'\rp$ with $\a \in \Bk$. 
The condition that $x|_{E^{\perp}/E}$ has type $\la'$ is given by 
\begin{equation*}
\tag{3.11.2}
( e_2 + \a e_1', e_{h-1} + \a e'_{h-2}) = 0. 
\end{equation*}
This implies that $(e_2, e_{h-1}) + \a^2(e'_1, e'_{h-2}) = 0$. 
Under the notation of (3.1.1), we have $\d_i = \d_{i+1}$. 
Thus by (3.1.2), $(e_2, e_{h-1}) = -(e'_1, e'_{h-2})$, and so $\a = \pm 1$.
We have
\begin{equation*}
\tag{3.11.3}
\SF_{u,C'} = \{ E = \lp e_1, e_2 + e_1'\rp, E' = \lp e_1, e_2 -e_1'\rp \ \}.
\end{equation*}
In particular, both of $E, E'$ are $F$-stable. As in Case I, 
we can define $u' \in C'^F$ by using $(\ol u|_E, \ol u|_{E^{\perp}/E})$. 
Then $u'$ is a split element. 
$A_M(u')$ is obtained from $A_G(u)$ by removing $x_i$ and $x_{i+1}$, 
under the notation in 2.6. Thus $F$ acts trivially on $A_M(u')$.  
As in the previous cases, we can find $\ol y \in \ol Y_{\ol u,\ol u'}^F$ 
such that $\ol y\ol Q$ corresponds to $E$.
If we choose $\ol y' \in \ol G^F$ such that $\ol y'\ol Q$ corresponds to $E'$, 
then ${\ol y'}\iv \ol u \ol y' \in \ol u''U_{\ol Q}$ 
for some $\ol u'' \in \ol C'^F$.  But then $\ol u''$ is also a split element in 
$\ol C''^F$, and $\ol u''$ is $\ol M^F$-conjugate to $\ol u'$.  
By replacing $\ol y'$ appropriately if necessary, we find 
$\ol y' \in \ol Y^F_{\ol u,\ol u'}$ such that $\ol y'\ol Q$ corresponds to $E'$. 
Thus the assumption of Lemma 3.10 is satisfied. 
\par\medskip\noindent
{\bf Case V. } \
Put $h = \la_{i+1} = \la_{i+2}$, with $h$ : even.
Let $M_{i+1}$ be the subspace of $V$ which is the direct sum of $V_k$ such that
$\la_k = h$. Then $V_{i+1} \subset M_{i+1}$.  We write the basis 
of $V_{i+1}$ given in 3.1 (b) as $e_1, \dots, e_h, f_1, \dots, f_h$, 
and write the basis of $V_i$ in 3.1 (a) as $e_1', \dots, e'_{h-1}$. 
Take $E \in \SF_{u,C'}$.   Then $E \subset M_{i+1}\oplus V_i$. 
Take $0 \ne v \in E \cap \Ker x$.  Similarly to Case II, 
by taking the $G_1$-conjugate, we may assume that $v \in V_{i+1} \oplus V_i$, 
where $G_1$ is the subgroup of $Z_{\ol G}(\ol u)$ which is isomoprhic to $Sp(M_{i+1})$.
Then $E \subset V_{i+1}\oplus V_i$. 
It follows from the condition for $\la'$ that $v$ must be in $V_{i+1}$.
Then under the conjugation by $Z_{\ol G}(\ol u)$, we may assume that $v = e_1$. 
$E$ is given as $E = \lp e_1, e_2 + z\rp$ with 
$z \in  (V_{i+1} \oplus V_i)\cap \Ker x$.   
So we write $z = \a f_1 + \b e'_1$. 
Then the condition $x|_{E^{\perp}/E}$ has type $\la'$ is given by 
\begin{equation*}
\tag{3.11.4}
(e_2 + \a f_1 + \b e_1', e_h + \a f_{h-1} + \b e'_{h-1}) = 0. 
\end{equation*}  
This implies that
$\a (e_2, f_{h-1}) + \a (f_1, e_h) + \b^2(e'_1, e'_{h-1}) = 0$. 
Since $(e_2, f_{h-1}) = (f_1, e_h) = -1$ by 3.1 (b), we have
\begin{equation*}
\tag{3.11.5}
\a = \frac{\b^2(e_1', e'_{h-1})}{2}.
\end{equation*}
Thus for any $\b \in \Bk$, there exists a unique 
$E_{\b} = \lp e_1, e_2 + z\rp \in \SF_{u,C'}$ given as above.  In particular, if 
we put $E = \lp e_1, e_2\rp$, then $E \in \SF_{u,C'}$ and $E_{\b}$ is
$Z_{\ol G}(\ol u)$-conjugate to $E$. Thus $Z_{\ol G}(\ol u)$ 
acts transitively on $\ol\CQ_{\ol u,\ol C'}$.  
Here $E \in \SF^F_{u,C'}$, and $(\ol u|_{E}, \ol u|_{E^{\perp}/E})$ 
determines $u' \in C'^F$.  
There exists $\ol y \in \ol Y^F_{\ol u,\ol u'}$ such that $\ol y\ol Q$ corresponds to $E$.
Note our process to obtain $u'$ from $u$ corresponds to the procedure,
\begin{equation*}
(\la_i, \la_{i+1}, \la_{i+2}) \mapsto (\la_i, \la_{i+1}-2, \la_{i+1}-2)
    \mapsto (\la_{i+1}-2, \la_{i+2} -2, \la_i),
\end{equation*}
where the second one is a rearrangement of the order of the parts.
Here the first one is $(\la_i, \la_{i+1}, \la_{i+2}) = (h-1, h, h)$ 
and the last one is $(\la'_i, \la'_{i+1}, \la'_{i+2}) = (h-2, h-2, h-1)$. 
We have $\d_i = (\la_i -2)/2 + i$ by (3.1.1).  
Then $\d'_{i+2} = (\la'_{i+2} - 1)/2 + (i+2) \equiv \d_i \pmod 2$. 
It follows that $u' \in C'^F$ is a split element. 
Since $A_G(u) \simeq A_M(u')$ with $F$-action, $F$ acts trivially on $A_M(u')$.
Thus the assumption of Lemma 3.10 is satisfied.
\end{proof}

\remark{3.12.}
In the statement in Proposition 3.11, 
the case (III) is not used in the later discussion, so we omitted it. 
Actually in this case, the structure of $\SF_{u,C'}$ is more complicated, 
and the splitness of $u' \in C'^F$ is not deduced from the split element 
$u \in C^F$.  
\par\medskip
We are now ready to prove Theorem 1.5 in the case of spin groups.
Note that split elements are defined in 3.2, and split extensions are 
defined in 3.7.

\begin{thm} 
Assume that $G = \Spin_N$. 
For each $(C, \SE) \in \SN_G^F$, 
let $u^{\bullet} \in C^F$ be a split element, and assume that 
$(C,\SE) \lra (u^{\bullet}, \r)$. 
For each $(L, C_0, \SE_0) \in \SM_G^F$, let $u_0^{\bullet} \in C_0^F$ be a 
split element as above (applied for $L$), 
and assume that $(C_0, \SE_0) \lra (u_0^{\bullet}, \r_0)$. 
Let $\wt\r_0 \in \wt A_L(u_0^{\bullet})\wg$ be an extension of $\r_0$, 
and for each $(C, \SE)$ belonging to the series $(L, C_0, \SE_0)$, 
let $\wt\r \in \wt A_G(u^{\bullet})\wg$ be the split extension of $\r$ associated to 
$\wt\r_0$.   
Then $\g(u_0^{\bullet}, \wt\r_0, u^{\bullet}, \wt\r) = 1$.
\end{thm}

\begin{proof}
By Proposition 2.5, we may assume that $(C, \SE) \in (\SN_G)^F_{\x}$ 
for $\x \in Z\wg$ such that $\x(\ve) = -1$. Then the Jordan type 
of $\ol C$ is $\la \in X_N$. 
By Remark 2.8, $\SN_{\x}^F = \emptyset$ if $F$ is of non-split type. 
Thus we may assume that $F$ is of split type. 
We define a split element $u = u^{\bullet} \in C^F$ as in 3.1 and 3.2.
Assume that $N \ge 7$, and consider an $F$-stable Levi subgroup 
$M$ of $G$ such that $\b(M) \simeq SL_2 \times SO_{N-4}$. 
Take $(C',\SE') \in (\SN_M)_{\x}$, and assume that $(C, \SE) \lra (u, \r)$, 
and $(C',\SE') \lra (u',\r')$, where $\la = \la(u), \la' = \la(u')$. 
Then the condition that $\r \otimes \r'^*$ 
appears in $\ve_{u,u'}$ is exactly that the pair $(\la, \la')$ satisfies 
the condition (I) $\sim$ (V) in 3.6.  
Now for a given $u \in C$, one can find 
$u' \in C'$ such that $(\la, \la')$ satisfies one of the conditions 
(I), (II), (IV) or (V). (Note that the case (III) is not necessary, since 
if the condition (III) is applicable for $\la_i$, 
then already the condition (II) can be applied for $\la_i$.)
We choose a split element $u' \in C'^F$.  Then by Proposition 3.11, 
$F$ acts trivially on $\ve_{u,u'}$, and also $F$ acts trivially on $A_M(u')$.  
Then by (1.8.1), $\s_{\r, \r'}: M_{\r,\r'} \to M_{\r,\r'}$ is 
the scalar multiplication by $q^{-(\dim C - \dim C')/2 + \dim U_Q}$.
By induction, we may assume that $\s_{(u',\r')}$ acts on $V_{(u',\r')}$
as a scalar multiplication by $q^{(a_0'+ r')/2}$, where $a_0',r'$ 
are defined similarly to $a_0, r$ by replacing $G$ by $M$.       
It follows that $\s_{(u,\r)}$ acts on $V_{(u,\r)}$ as a scalar multiplication 
by $q^{(a_0 + r)/2}$, and we conclude that $\g = 1$ by (1.8.2).  
\par
It remains to consider the case where $N \le 6$.
If $N = 5$ or 6, $(u, \r)$ is a cuspidal pair by [LS, Cor.4.9]. Thus we have 
$\g = 1$. So we assume that $N = 4$ or 5.   
First consider the case where $N = 5$.  In this case, 
$X_5 = \{ (5), (122)\}$, 
and $\SN_{\x}$ corresponds to $(L, C_0, \SE_0) \in \SM_{\x}$, where 
$L$ is a Levi subgroup of $G = \Spin_5$ such that $L \simeq SL_2$, and 
for $(u_0, \r_0) \lra (C_0, \SE_0)$, $u_0$ is a regular unipotent element of $L$, 
and $\r_0$ is a non-trivial character of $A_L(u_0) = Z_L \simeq \BZ/2\BZ$. 
Thus if we regard $X_1 = \{ (1)\}$ as the set corresponding to $u_0$, 
we have a bijection $X_1 \simeq (\SN_L)_{\x}$, and (1.8.1) holds for this situation. 
In particular the verification of the theorem for 
the case $N = 5$ is reduced to the case $N = 1$, namely the case where $G = SL_2$.
In this case, the assertion is already known by [S3, I, \S 3], and actually it is 
easy to check directly. 
If $N = 4$, then $X_4 = \{ (22), (13)\}$, and $\SN_{\x}$ corresponds to 
$(L, C_0, \SE_0) \in \SM_{\x}$, where $L$ is a Levi subgroup of 
$G = \Spin_4$ such that $L \simeq SL_2$, ($u_0, \r_0$) is 
the same as above.  In this case, we can regard $X_0 = \{ (-) \}$ 
as the set corresponding to
$u_0$, we have a bijection $X_0 \simeq  (\SN_L)_{\x}$, and (1.8.1) 
holds. Thus the theorem holds also for the case where $N = 4$. 
We have proved the theorem in the case where $F$ acts trivially on $Z_G(u)$.
\par
Next we assume that $F$ acts non-trivially on $A_G(u)$.  
But then $F^2$ acts trivially 
on it.  Thus the theorem holds if we replace $F$ by $F^2$. 
By Lemma 3.8, we can find a split extension $\wt\r$ of $\r$ for each split element 
$u \in C^F$.  Thus the theorem holds for this case.  
The theorem is now proved. 
\end{proof}
\par\bigskip

\section{Proof of Theorem 1.5 -- the case of special linear groups}

\para{4.1}
In this section, we assume that $G = SL_n$, and $\ch \Bk$ is arbitrary.
Let $p$ be the characteristic exponent of $\Bk$, namely, 
write $p = \ch \Bk$ if $\ch \Bk > 0$, and $p = 1$ if $\ch \Bk = 0$.  
Let $F = \s F_0$ be a twisted Frobenius map on $G$, where $F_0$ is 
the standard Frobenius map, and $\s$ is the graph automorphism 
defined by $\s(g) = \dw_0 {}^tg\iv \dw_0\iv$, where $\dw_0 \in G$ 
is the permutation matrix corresponding to the longest element 
$w_0 \in W = S_n$. 
\par
Let $Z$ be the center of $G$.  Then $Z = \{ \a I_n \mid \a \in \Bk, \a^n = 1\}$
is the cyclic group of order $n'$, where $n'$ is the largest divisor of $n$
which is prime to $p$. 
Let $u = u_{\la} \in G$ be a unipotent element with Jordan type $\la$ of $n$, 
and we denote by $C_{\la}$ the unipotent class in $G$ containing $u_{\la}$. 
We write $\la = (\la_1, \la_2, \dots \la_r) = (1^{m_1}, 2^{m_2}, \dots)$,
where $0 < \la_1 \le \la_2 \le \cdots \le \la_r$, 
and put $I = \{ i \in \BZ_{\ge 1} \mid m_i \ne 0\}$.  
Let  
$\{ v_{k,j} \mid 1 \le k \le r, 1 \le j \le \la_k \}$
be the Jordan basis of $u$ such that $(u -1)v_{k,j} = v_{k, j-1}$ with the 
convention $v_{k,0} = 0$. For $i \in I$, we define a subspace 
$V_i$ of $V$ by $V_i = \lp v_{k, j} \mid \la_k = i \rp$. 
We have a decomposition 
$V = \bigoplus_{i \in I}V_i$ into $u$-stable subspaces, 
where the Jordan type of $u|_{V_i}$ is $(i^{m_i})$.   
Let $V_i^0$ be the subspace of $V_i$ generated by 
$\{ v_{k,i} \mid \la_k = i\}$.   
(Note that the notation $V_i$ here is different from 
that in 3.1.) 
The Levi part $M_u$ of $Z_G(u)$ is isomorphic to 
the subgroup of $\prod_i GL(V_i^0)$ given by 
\begin{equation*}
\tag{4.1.1}
M _u = \{ g = (g_i) \in \prod_{i \in I}GL(V_i^0) \mid \prod_{i\in I} (\det g_i)^i = 1 \}.
\end{equation*} 
Let $i'$ be the $p'$-part of $i$.  Then the condition  in the right hand side of 
(4.1.1) is equivalent to the condition that $\prod_i (\det g_i)^{i'} = 1$. 
Let $n'_{\la}$ be the greatest common dividor of $\{ i' \mid i \in I\}$, and write
$i' = i''n'_{\la}$.  Thus we have 
\begin{equation*}
\prod_i(\det g_i)^{i'} = \biggl(\prod_i(\det g_i)^{i''}\biggr)^{n'_{\la}}. 
\end{equation*}
It follows that the connected component $M_u^0$ of $M_u$ is given by  
\begin{equation*}
\tag{4.1.2}
M_u^0 = \{ g = (g_i) \in \prod_{i \in I}GL(V_i^0) \mid \prod_{i \in I}(\det g_i)^{i''} = 1\},
\end{equation*}
and $A_G(u) \simeq M_u/M_u^0$ is a cyclic group of order $n'_{\la}$.
Note that $n'_{\la}$ coincides with the greatest common divisor of 
$\la_1, \dots, \la_r, n'$. 

\para{4.2.}
Let $\x \in Z\wg$ and consider $A_G(u)\wg_{\x}$. 
We have a natural map $\BZ/n'\BZ \to  \BZ/n'_{\la}\BZ$, 
and $\r \in A_G(u)\wg_{\x}$ is equivalent to the condition that 
$\x$ is the lift of $\r$.
If $\x \in Z\wg$ is of order $d$, then $\r$ is of order $d$ also, 
and $d$ is a divisor of $n'_{\la}$. For a given $\x$, there exists 
such a $\r$ uniquely. 
Thus we have 
\begin{equation*}
\tag{4.2.1}
|A_G(u)_{\x}\wg|  = \begin{cases}
                       1    &\quad\text{ if $d$ divides each $\la_i$, } \\
                       0    &\quad\text{ otherwise. }
                    \end{cases}
\end{equation*}

\para{4.3.}
It is known by [L1] that in the case where $G = SL_n$, 
$(C, \SE)$ is a cuspidal pair 
for $G$ if and  only if $C$ is the regular unipotent class, and 
$\r \in A_G(u)\wg = Z\wg$ has order $n$ for $(u,\r) \lra (C, \SE)$.
Thus for $\x \in Z\wg$ of order $n$, there exists exactly one 
cuspidal pair $(C_u, \SE_{\r})$ pertaining $\x$, where $u$ : regular unipotent, 
$\r \in A_G(u)\wg_{\x}$.
\par
The generalized Springer correspondence is described as follows.
Fix $\x \in Z\wg$. Then there exists a unique Levi subgroup  
$L$ which has a cuspidal pair $(C_0, \SE_0) \in (\SN_L)_{\x}$, 
and this cuspidal pair is also unique.  Assume that $\x$ is of order $d$.
Then $L = S(GL_d \times GL_d \times \cdots \times GL_d)$ with $n/d$-factors. 
$u_0 \in C_0$ is a regular unipotent element in $L$, and 
$\r_0 \in A_L(u_0)\wg$ corresponding to $\SE_0$ is given 
as follows. We have a natural map $Z_{SL_d} \to Z_L(u_0)/Z^0_L(u_0)$, which 
gives an isomorphism $Z_{SL_d} \simeq A_L(u_0)$.  Thus $A_L(u_0)$ is the cyclic
group of order $d'$, and $(u_0, \r_0)$ is uniquely determined from the cuspidal 
pair of $SL_d$ pertaining $\x$.     
In this case, $N_G(L)/L \simeq S_{n/d}$, the cyclic group of degree $n/d$. 
Thus the generalized Springer correspondence pertaining $\x$ is given as follows;
\begin{equation*}
\tag{4.3.1}
\SN_{\x} = \{ (u, \r) \mid \text{ $\la_i$ : divisible by $d$}, 
                              \r \in A_G(u)\wg_{\x} \} 
         \lra S_{n/d}\wg,  
\end{equation*}
where $(u,\r) \lra (C_{\la},\SE)$. 
(Here $\r$ is determined uniquely from $u$ by (4.2.1).)
\par
By [LS, Prop. 5.2], this correspondence is given by 
$u_{\la} \mapsto E_{\mu} \in S_{n/d}\wg$, where 
$\mu = \la/d = (\la_1/d, \la_2/d, \dots ) \in \SP_{n/d}$, and 
$E_{\mu}$ is the irreducible representation of $S_{n/d}$ corresponding 
to $\mu$.

\para{4.4.}
We consider $(L, C_0, \SE_0) \in \SM_G$, where $L$ is of type 
$A_{d-1} + A_{d-1} + \cdots$.  Then $L$ is $F$-stable, 
and the regular unipotent class $C$ is $F$-stable.  The center $Z$ of $G$
is the set of scalar matrices, hence the action of $F_0$ and of $F$ on $Z$
are given by, for $z \in Z$, $F_0(z) = z^q$, $F(z) = z^{-q}$.  
Thus the actions of $F_0, F$ on $Z\wg$ are given by $\x \mapsto \x^q$, 
$\x \mapsto \x^{-q}$.  Hence 
$\x$ is $F_0$-stable (resp. $F$-stable) if and only if $d|q-1$ (resp. $d|q+1$). 
Now take $u_0 \in C_0^F$.  Then $F$ acts on $A_L(u_0)$. 
We have $A_L(u_0) \simeq Z_{SL_d}$, and the action of $F$ is compatible with 
this isomorphism.  It follows that if $\x \in Z\wg$ is $F$-stable, 
then the corresponding $\SE_0$ is $F$-stable. 
We have $N_G(L)/L \simeq S_{n/d}$, and the action of $F$ on $N_G(L)/L$ corresponds 
to the $\s$-action on $S_{n/d}$, namely the conjugation action by $w_0$.
\par
We shall apply the restriction formula (1.8.1) for $(L, C_0, \SE_0)$.   
Let $Q$ be the $F$-stable parabolic subgroup of $G$, and $M$ its $F$-stable Levi
subgroup such that $M \simeq S(GL_d \times GL_{n-2d} \times GL_d)$.   
Take $(C, \SE) \in \SN_G$ and $(C',\SE') \in \SN_M$, and $u \in C, u' \in C'$.
Here $u$ is of type $\la$ and $u'$ is of type $\la'$. 
We consider $Y_{u,u'}$, and $\CQ_{u,C'}$ as in 3.9.  Put $x = u-1$.  
Then $\CQ_{u,C'}$ can be identified with
\begin{align*}
\tag{4.4.1}
\SF_{u, \la'} = \{ (W \subset W ') &\mid \dim W = d, \dim W ' = n-d,  
       W, W ' \text{: $x$-stable, } \\
     &x|_{W '/W} \text{ : type $\la'$},
          x|_{W}, x_{V/W '} \text{: regular nilpotent} \}.
\end{align*}

\para{4.5.}
Since $d|\la_i$ for any $i$, we put $\mu = \la/d \in \SP_{n/d}$, and similarly 
$\mu' = \la'/d \in \SP_{n/d -2}$.  
Then $\mu'$ is obtained from $\mu$ by removing two nodes from the corresponding 
Young diagram. 
\par
For the pair $(\la, \la')$, there are three possibilities;
we write $\mu : \mu_1 \le \mu_2 \le \cdots, \mu': \mu_1'\le \mu'_2 \le \cdots$.
\par\medskip
(I) \ $\mu_{i-1} + 2  \le \mu_i$, and $\mu'_i = \mu_i -2$.
\par
\hspace*{-1mm}(II) \ $\mu_{i-1} + 1 \le \mu_i = \mu_{i+1}$, and $\mu'_i = \mu'_{i+1} = \mu_i-1$.
\par
\hspace{-2mm}(III) \ $\mu_{i-1} + 1 \le \mu_i$ and $\mu_{j-1} + 1 \le \mu_j$ for $i \ne j$, 
and $\mu'_i = \mu_i -1, \mu'_j = \mu_j -1$.  
\par\medskip
Let $E_{\mu} \in S_{n/d}\wg$ and $E_{\mu'} \in S\wg_{n/d -2}$. 
Then 
\begin{equation*}
\tag{4.5.1}
\lp E_{\mu}|_{S_{n/d -2}}, E_{\mu'} \rp_{S_{n/d -2}} =
                \begin{cases}
                   1   &\quad\text{ for the case (I) or (II), }  \\
                   2   &\quad\text{ for the case (III).} 
                \end{cases}
\end{equation*}
We fix $\mu$, and consider the possibility for the choice of $\mu'$.
If there exists $i$ such that $\mu_{i-1} \le \mu_i -2$, we can apply (I).
If there exists $i$ such that $\mu_{i-1}+1 \le \mu_i = \mu_{i+1}$, 
we can apply (II).  Thus for (III), we have only to consider the case
where $\mu_i = \mu_{i-1}+1$ for any $i$. 
But in this case, we can choose $\mu_i' = \mu_i-1, \mu'_{i+1} = \mu_{i+1}-1$, 
namely we can choose $i \ne j$ in (III) as $j = i+1$. Moreover, all the rows
have multiplicity one.

\begin{lem}   
Let $\la = d\mu \in \SP_n, \la' = d\mu' \in \SP_{n-2d}$ be as in 4.5
\begin{enumerate}
\item 
Assume that $(\la, \la')$ is in the case {\rm (I)} or {\rm (II)} in 4.5.  
Then $Z_G(u)$ acts transitively on $\SF_{u,\la'}$.
\item
Assume that $(\la, \la')$ is in the case {\rm (III)} in 4.5. 
Let $\nu \in \SP_{n-d}$ be the partition obtained from $\la$ 
by replacing $\la_i$ by $\nu_i = \la_i-d$, and $\nu' \in \SP_{n-d}$ 
by replacing $\la_j$ by $\nu'_j = \la_j-d$.  Put
\begin{align*}
\SF^{\nu}_{u, \la'} &= \{ (W \subset W ') \in \SF_{u,\la'}  
                          \mid u|_{V/W} \text{ : type $\nu$}\}, \\
\SF^{\nu'}_{u,\la''} &= \{ (W \subset W') \in \SF_{u,\la'}
                           \mid u|_{V/W} \text{ : type $\nu'$}\}.
\end{align*} 
Then $\SF_{u,\la'} = \SF^{\nu}_{u,\la'} \sqcup \SF^{\nu'}_{u, \la'}$, 
and $Z_G(u)$ acts transitively on $\SF^{\nu}_{u,\la'}$ and on $\SF^{\nu'}_{u,\la'}$. 
In particular, $\SF_{u,\la'}$ has two $Z_G(u)$-orbits. 
\end{enumerate}
\end{lem}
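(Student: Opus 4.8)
The plan is to work with $GL(V)=GL_n$ in place of $SL_n$: since the centre of $GL(V)$ acts trivially on every variety of subspaces of $V$ and $Z_{GL(V)}(u)=Z(GL(V))\cdot Z_{SL_n}(u)$, the $Z_G(u)$-orbits on $\SF_{u,\la'}$ coincide with the $Z_{GL(V)}(x)$-orbits, where $x=u-1$ is nilpotent of Jordan type $\la$. Recall that a point of $\SF_{u,\la'}$ is a pair of $x$-stable subspaces $W\subset W'$, $\dim W=d$, $\dim W'=n-d$, with $x|_W$ and $x|_{V/W'}$ single Jordan blocks of size $d$ and $x|_{W'/W}$ of type $\la'$; thus $W$ and $V/W'$ are cyclic $\Bk[x]$-modules of dimension $d$, while $V/W$ is an extension of the cyclic module $V/W'$ by the module $W'/W$ of type $\la'$.

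First I would analyse $W$ alone. Since $x|_W$ is a single Jordan block, $W$ is $x$-cyclic of dimension $d$, and the basic fact from the structure theory of $\Bk[x]$-modules is that the $Z_{GL(V)}(x)$-orbit of such a $W$ is determined by the Jordan type $\nu$ of $x|_{V/W}$ (equivalently of $u|_{V/W}$). Not every partition of $n-d$ occurs here: $\nu$ must satisfy a Pieri-type constraint relative to $\la$, and in addition $V/W$ must admit the prescribed cyclic quotient $V/W'$ of size $d$ with kernel $W'/W$ of type $\la'$, which imposes a second (dual) Pieri-type constraint relating $\nu$ to $\la'$. The first key step is to run through the hypotheses of 4.5 — $\mu_{i-1}+2\le\mu_i$ in case (I), $\mu_{i-1}+1\le\mu_i=\mu_{i+1}$ in case (II), and in case (III) the reduced situation in which $\mu$ is a strict staircase with $j=i+1$ — and to check that the set of admissible $\nu$ has exactly one element in cases (I), (II) (namely $\la$ with $\la_i$, resp. one of the two equal parts $\la_i=\la_{i+1}$, lowered by $d$) and is precisely $\{\nu,\nu'\}$ in case (III). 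This gives at once the decomposition $\SF_{u,\la'}=\SF^\nu_{u,\la'}\sqcup\SF^{\nu'}_{u,\la'}$ of part (ii) (and its one-term analogue behind part (i)), and reduces the lemma to proving that $Z_{GL(V)}(x)$ is transitive on each $\SF^\nu_{u,\la'}$.

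For transitivity on a fixed $\SF^\nu_{u,\la'}$ I would argue in two steps. First normalise $W$: by the above, all $x$-cyclic dimension-$d$ subspaces with $V/W$ of the fixed type $\nu$ are $Z_{GL(V)}(x)$-conjugate, so we may take $W$ to be a standard subspace adapted to a Jordan basis of $x$. Next, the stabiliser of $W$ in $Z_{GL(V)}(x)$ maps onto $Z_{GL(V/W)}(x)$ — any $x$-commuting automorphism of $V/W$ lifts to one of $V$ preserving the cyclic submodule $W$ — so it suffices to show that $Z_{GL(\ol V)}(x)$, $\ol V:=V/W$, is transitive on the admissible $\ol W':=W'/W$; but $\ol W'$ has the prescribed quotient $V/W'$ (cyclic of size $d$) and the prescribed type $\la'$, and since one of these two module types is a single Jordan block this is again a single $Z_{GL(\ol V)}(x)$-orbit. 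Chaining the two steps yields transitivity on $\SF^\nu_{u,\la'}$, hence one orbit in cases (I), (II) and two in case (III). I expect the main obstacle to be exactly the combinatorial heart: pinning down the correct restricted Pieri rules for cyclic sub- and quotient-modules and verifying, from the precise case hypotheses, that they admit one, one, and two solutions respectively — this is what makes the count $1$, $1$, $2$ — together with the single-orbit statement for submodules carrying simultaneously prescribed sub- and quotient-type in the special case at hand. It is probably cleanest to fuse the two transitivity steps into a single normalisation of the full chain $0\subset W\subset W'\subset V$ relative to an adapted Jordan basis, in the explicit style of \S3, and to handle any degenerate configurations (e.g.\ $d$ equal to a part $\la_i$, so that $W$ may be a full Jordan block of $V$, or $n-2d$ small) by direct inspection; non-emptiness of $\SF^\nu_{u,\la'}$ follows by exhibiting the standard representative, or from (1.7.4) together with (4.5.1).
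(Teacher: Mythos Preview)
Your strategy is the paper's: pass to $GL_n$, project $(W\subset W')\mapsto W$, classify the admissible $W$ by the Jordan type $\nu$ of $V/W$, and then handle the fibre. The paper phrases the fibre step via duality on $V/W$ where you invoke surjectivity of $\operatorname{Stab}(W)\to\Aut_{\Bk[x]}(V/W)$; these are equivalent, and your transitivity argument on each fixed-$\nu$ stratum is correct.

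There is, however, a genuine gap in your case (III) count --- and the paper's one-line ``dealt similarly'' shares it. The two Pieri constraints say exactly that $\la/\nu$ and $\nu/\la'$ are horizontal $d$-strips; since $\la$ and $\la'$ agree outside rows $i,j$ this forces $\nu_k=\la_k$ for $k\ne i,j$ and leaves
\[
(\nu_i,\nu_j)=(\la_i-a,\ \la_j-(d-a)),\qquad 0\le a\le d.
\]
For $d=1$ this is your two solutions $\nu,\nu'$, but for $d>1$ all $d{+}1$ values of $a$ satisfy the interlacing inequalities, and each is realised by an actual flag. Concretely, take $d=2$, $\mu=(1,2)$ (strict staircase, $j=i{+}1$), so $\la=(2,4)$, $\la'=(2)$: the cyclic submodule $W=\Bk[x]\!\cdot\!(x,x^2)$ of $V=\Bk[x]/(x^2)\oplus\Bk[x]/(x^4)$ has $V/W$ of type $(1,3)$, which is neither the paper's $\nu=(4)$ nor $\nu'=(2,2)$; and the fibre is nonempty, since $\ol W'=\Bk[x]\!\cdot\!(1,x)\subset V/W\simeq\Bk[x]/(x)\oplus\Bk[x]/(x^3)$ has type $(2)$ with cyclic quotient $\Bk[x]/(x^2)$. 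Hence $\SF_{u,\la'}\supsetneq\SF^{\nu}_{u,\la'}\sqcup\SF^{\nu'}_{u,\la'}$ and there are at least three $Z_G(u)$-orbits. So the combinatorial heart you flag as the obstacle does not come out as $1,1,2$; part (ii) as stated cannot be proved by this route for $d>1$, and the exhaustion claim needs either an extra argument or a reformulation (e.g.\ in terms of top-dimensional components only, which is what \S4.10 actually uses).
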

 
\begin{proof}
Assume that $(\la, \la')$ is in the case (I) or (II).
Let $\mu'' \in \SP_{n/d -1}$ be the partition obtained from $\mu$
by replacing $\mu_i$ by $\mu_i'' = \mu_i-1$.  Put $\la'' = d\mu''$.   
Put
\begin{align*}
\SF^1_{u,\la''} = \{ W \subset &V \mid \dim W = d, W 
      \text{ : $x$-stable}, \\
        &\text{$x|_W$ : regular nilpotent, $x|_{V/W}$ : type $\la''$} \}. 
\end{align*}
We have a surjective map $\vf : \SF_{u,\la'} \to \SF^1_{u,\la''}$, 
$(W  \subset W ') \mapsto W$. Then $\vf$ is $Z_G(u)$-equivariant.  
It is clear that $Z_G(u)$ acts transitively on $\SF^1_{u,\la''}$ 
(note that we may replace $Z_G(u)$ by $Z_{GL_n}(u)$, and it is enough 
to consider the case where $d = 1$). 
We choose $W \in \SF^1_{u,\la''}$, and put $\ol V = V/W$.  $u$ acts on 
$\ol V$, and put $\ol u = u|_{\ol V}$. Then the fibre $\vf\iv(W)$ 
is isomorphic to the variety of subspaces $\ol W' \subset \ol V$ such that
$\ol W'$ is $\ol u$-stable, $\ol u|_{\ol W'}$ is of type $\la'$,
$\ol u|_{\ol V/\ol W'}$ is regular unipotent. 
But this variety is the same type as $\SF^1_{u, \la''}$, if we 
consider the dual space $\ol V^*$ of $\ol V$ and a $\ol u$-stable subspace 
$\ol W^{\perp} \subset \ol V^*$.  In particular $Z_{\ol G}(\ol u)$ acts transitively 
on $\vf\iv(W)$, where $\ol G = SL(\ol V)$. 
Thus $Z_G(u)$ acts transitively on $\SF_{u,\la'}$.
\par
The case $(\la, \la')$ is in the case (III) is dealt similarly. 
\end{proof}
\para{4.7.}
Let $V$ be an $n$-dimensional vector space over $\Bk$, with 
standard basis $e_1, \dots, e_n$.  We denote by $V_0$ the 
$\BF_{q^2}$-subspace of $V$ generated by $\{ e_i\}$. 
We consider the sesquilinear form $\Psi(v,w) : V_0 \times V_0 \to \BF_{q^2}$,
i.e., $\Psi(v,w) = {}^tv A \ol w$, where we write 
${}^tv = (v_1, \dots, v_n) \in \Bk^n$, $\ol w = {}^t(w_1^q, \dots, w_n^q)$, 
and $A$ is the Hermitian matrix, i.e., ${}^tA = \ol A$. 
Then $A$ is unitary congruent to the identity matrix ([G, Th. 10.3]), 
i.e., there exists $B \in GL_n(\BF_{q^2})$ such that $A = {}^tB\cdot \ol B$. 
It follows that any sesquilinear form $\Psi$ is unitary congruent to the
form $(v, w) = \sum_{i=1}^nv_iw_{n-i}^q$.    
\par
Let $\la = (\la_1, \dots, \la_r) \in \SP_n$, 
and $C = C_{\la}$ the corresponding unipotent class. 
Let $V_0$ be the $\BF_{q^2}$-subspace of $V$ generated by 
the Jordan basis $\{ v_{k,j}\}$ of $u \in C^{F^2}$. 
We define a sesquilinear form $(\ ,\ )$ on $V_0$ by
\begin{equation*}
\tag{4.7.1}
  (v_{k, j}, v_{k', j'}) = \begin{cases}
                        (-1)^{j + a_k} 
                          &\quad\text{ if $k = k', j + j' = \la_k$, } \\
                        0     &\quad\text{ otherwise. }
                                  \end{cases},
\end{equation*} 
where $a_k = \pm 1$. 
Then $u \in C^{F^2}$ leaves the form $(\ ,\ )$ invariant, and so $u \in C^F$. 
We call $u$ a split 
element in $C^F$.  Thus split elements are determined up to 
$GL_n^F$-conjugates.  
\par
Let $(\la, \la')$ be as in 4.5.  We choose $u \in C_{\la}^F$ a split element.
Then $\SF_{u, \la'}$ and $\SF^{\nu}_{u,\la'}, \SF^{\nu'}_{u,\la'}$
in Lemma 4.6 are $F$-stable.  We show the following lemma. 

\begin{lem}  
Let $u \in C_{\la}^F$ be a split element.
\begin{enumerate}
\item
Assume that $(\la, \la')$ is in the case {\rm (I)} or {\rm (II)} of 4.5.
Then there exists $E \in \SF^F_{u,\la'}$,  a split element 
$u' \in C'^F$ and $y \in Y_{u,u'}^F$ such that $yQ$ corresponds to 
$E$ under the identification $\CQ_{u,C'} \simeq \SF_{u,\la'}$.  
\item
Assume that $(\la, \la')$ is in the case {\rm (III)} with $j = i+1$. 
Then there exist
a split element $u' \in C'^F$, 
$E \in (\SF^{\nu}_{u,\la'})^F$, $E' \in (\SF^{\nu'}_{u,\la'})^F$
and $y, y' \in Y_{u,u'}^F$ such that 
$yQ$ (resp. $y'Q$) corresponds to $E$ (resp. $E'$). 
\end{enumerate}
\end{lem}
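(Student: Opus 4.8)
The plan is to produce the required $F$-stable flags by a Lang--Steinberg argument applied to the centralizer of $u$ in $GL(V)$, and then to transport the conclusion back to $G^F$. First I would record two structural facts. Since $u$ is unipotent, $Z_{GL(V)}(u)$ is the unit group of the associative algebra $\End_{\Bk[u]}(V)$, hence a connected algebraic group; and since scalar matrices lie in it and act trivially on subspaces of $V$, its orbits on $\SF_{u,\la'}$ coincide with the $Z_G(u)$-orbits. Moreover $u$, being a split element, lies in $G^F$, so $F$ normalizes $Z_{GL(V)}(u)$, which is thus an $F$-stable connected subgroup; likewise $x=u-1$ is fixed by $F$, so $\SF_{u,\la'}$ is $F$-stable, and in case (III) each of the pieces $\SF^{\nu}_{u,\la'}$, $\SF^{\nu'}_{u,\la'}$ is $F$-stable since they are distinguished by the Jordan type of $u|_{V/W}$ and $\nu\ne\nu'$ there (all $\la_i$ being distinct under the hypothesis of that case).

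By Lemma 4.6, in cases (I) and (II) the connected group $Z_{GL(V)}(u)$ acts transitively on the nonempty $F$-stable variety $\SF_{u,\la'}$, so Lang's theorem gives a point $E\in\SF^F_{u,\la'}$; in case (III), applying the same argument to the two $Z_{GL(V)}(u)$-homogeneous pieces separately yields $E\in(\SF^{\nu}_{u,\la'})^F$ and $E'\in(\SF^{\nu'}_{u,\la'})^F$. Having fixed such an $F$-stable $E$, I would pass back through the identification $\CQ_{u,C'}\isom\SF_{u,\la'}$: $E$ corresponds to a coset $gQ\in\CQ_{u,C'}$, which is $F$-stable, and since $Q$ is a connected $F$-stable parabolic we have $(G/Q)^F=G^F/Q^F$ by Lang, so $gQ=yQ$ for some $y\in G^F$. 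Then $y\iv uy\in C'U_Q$, and projecting along the $F$-equivariant morphism $Q\to M$ shows that the $M$-component $u'$ of $y\iv uy$ lies in $C'\cap M^F=C'^F$, so $y\in Y^F_{u,u'}$.

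It then remains to check that $u'$ is a split element of $C'^F$. Here I would use that the $F$-stable parabolics of the given type are exactly the stabilizers of flags $(W\subset W^{\perp})$ with $W$ a $d$-dimensional totally isotropic subspace for the Hermitian form of 4.7; hence $W'=W^{\perp}$, the two outer Levi factors act on $W$ and $V/W^{\perp}$ (and are interchanged by $F$, their $F$-fixed subgroup being a copy of $GL_d(\BF_{q^2})$ where a regular unipotent element forms a single class), and the middle factor acts on the nondegenerate Hermitian space $W^{\perp}/W$, on which $u'$ induces a unipotent element of Jordan type $\la'$. Since over $\BF_{q^2}$ every nondegenerate Hermitian form is equivalent to the standard one and a unitary unipotent element is determined up to conjugacy by its Jordan type, this element on $W^{\perp}/W$ is (up to $GL_{n-2d}(\BF_{q^2})$-conjugacy) the split one; combined with the outer factors this makes $u'$ split. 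In case (III) one finally conjugates by a suitable $m\in M^F\subset Q^F$, replacing $y'$ by $y'm$, so that a single split $u'\in C'^F$ serves for both $E$ and $E'$.

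\textbf{Main obstacle.} The delicate part is the last step: confirming that the $u'$ produced abstractly by Lang is split, rather than merely of the right Jordan type. This requires pinning down the Hermitian structure on $W^{\perp}/W$ induced by (4.7.1) and checking, case by case much as in the situations treated in Proposition 3.11, that the induced pair (nilpotent transformation, form) — in particular the sign data $a_k$ — is the split one after passage to the subquotient, together with the bookkeeping in case (III) needed to realize both $Z_G(u)$-orbits over a common split $u'$. If, however, one already knows that $C'^F$ consists of a single $GL$-type $M^F$-class, this verification becomes automatic.
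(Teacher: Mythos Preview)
Your approach is correct and genuinely different from the paper's. The paper works by explicit construction in terms of the Jordan basis $\{v_{k,j}\}$: in case (I) it takes $W_0=\langle v_{i,1},\dots,v_{i,d}\rangle$ and $W_0'=W_0^{\perp}$; in case (II) it first chooses an isotropic vector $v$ for the auxiliary form $\langle v,w\rangle=(v,x^{h-1}w)$ on a two-dimensional space and sets $W_0=\langle x^{h-1}v,\dots,x^{h-d}v\rangle$; in case (III) it uses $W_0=\langle \alpha e_1+e_1',\dots,\alpha e_d+e_d'\rangle$ with $\alpha\ne 0$ and $\alpha=0$ to hit the two orbits. In each case the paper then reads off directly from the shape of $W_0$ that $u|_{W_0^{\perp}/W_0}$ is a split element of type $\la'$. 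Your route---transitivity from Lemma~4.6 plus Lang--Steinberg on the connected group $Z_{GL(V)}(u)$---bypasses all of this bookkeeping and is conceptually cleaner, at the cost of making the splitness of $u'$ an abstract rather than visible fact.

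Your ``main obstacle'' is not an obstacle at all, and you have already written down its resolution: you should promote that final conditional sentence to an assertion. The centralizer of any unipotent element in a product of general linear groups is connected, so for $\ol M = GL_d\times GL_{n-2d}\times GL_d$ the set $C'^F$ is a single $\ol M^{F}$-orbit by Lang. Since split elements in 4.7 are only pinned down up to $GL^F$-conjugacy, \emph{every} element of $C'^F$ is split, and in particular your abstractly produced $u'$ is. No case-by-case check of the sign data $a_k$ is needed. The same connectedness lets you conjugate inside $\ol M^{F}$ in case (III) to arrange a common $u'$ for both $E$ and $E'$, exactly as you sketch.
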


\begin{proof}
First consider the case (I) in 4.5.  
Let $W_0$ be the subspace of $V_0$ spanned by $v_{i,1}, \dots, v_{i,d}$.   
Then $W_0$ is $u$-stable, $u|_{W_0}$ is regular unipotent. 
We consider the flag $(W _0\subset W_0^{\perp})$ in $V_0$.  
Then $u|_{W_0^{\perp}/W_0}$ is of type $\la'$ and $u|_{V_0/W_0^{\perp}}$
is regular unipotent. 
Thus if we put $W = \Bk\otimes_{\BF_{q^2}}W_0, W' = \Bk \otimes_{\BF_{q^2}}W_0^{\perp}$, 
the flag $E = (W \subset W')$ is contained in $\SF_{u,\la'}$, and is $F$-stable. 
Moreover $u|_{W_0^{\perp}/W_0}$ is a split element of type $\la'$ on $W_0^{\perp}/W_0$.
Here $M$ is $GL_n^F$-conjugate to $S(GL(W) \times GL(W'/W) \times GL(V/W'))$.  
Thus there exists a split element 
$u' \in C'^F$ such that $u'$ is $GL_n^F$-conjugate to 
$(u|_W, u|_{W'/W}, u|_{V/W'})$. 
This implies that there exists $y \in Y_{u,u'}^F$ such that $yQ$ corresponds to 
$E$ under the isomorphism $\CQ_{u,C'} \simeq \SF_{u,\la'}$.  
\par
Next consider the case (II) in 4.5. 
Put $\la_i = \la_{i+1} = h$, and $x = u-1$. 
Let $U_0$ be the subspace of $V_0$ spanned by $v_{i,h}, v_{i+1, h}$.
We define a form $\lp\ ,\ \rp$ on $U_0$ by
$\lp v, w \rp = (v, x^{h-1}w)$.  
Then $\lp v,w\rp$ is a non-degenerate sesquilinear form on $U_0$. 
We choose $0 \ne v \in U_0$ such that $\lp v, v\rp = 0$. 
Let $W_0$ be the subspace of $V_0$ spanned by $x^{h-1}v, x^{h-2}v, \dots, x^{h-d}v$.
Then $W_0$ is a $u$-stable subspace of $V_0$ with dimension $d$, and 
$u|_{W_0}$ is regular unipotent. We have $u|_{W_0^{\perp}/W_0}$ is of type $\la'$, 
and $u|_{V_0/W_0^{\perp}}$ is regular unipotent.  Thus if we define the corresponding
subspace $W , W'$ in $V$ as before,  
we see that $E = (W \subset W') \in \SF_{u,\la'}^F$. 
Since $u|_{W_0^{\perp}/W_0}$ is a split element on $W_0^{\perp}/W_0$, 
one can find a split element $u' \in C'^F$, and $y \in Y_{u,u'}^F$ such that 
$yQ$ corresponds to $E$, similarly to the previous case.
\par
Finally consider the case (III) of 4.5.
Put $h = \la_i$. Write  
$v_{i,1}, \dots, v_{i,h}$ as $e_1, \dots, e_h$, and 
write $v_{i+1,1}, \dots, v_{i+1, h+d}$ as $e_1', \dots, e'_{h + d}$. 
Let $W_0$ be the subspace of $V_0$ spanned by 
$\a e_1 + e_1', \dots, \a e_d + e'_d$ for $\a \in \BF_{q^2}$. 
Then $W_0$ is $u$-stable, 
$u|_{W_0}$ is regular unipotent, $u|_{W_0^{\perp}/W_0}$ is of type $\la'$, 
and $u|_{V_0/W_0^{\perp}}$ is regular unipotent.    
Here note that $u|_{V_0/W_0}$ is of type $\nu$ (resp. $\nu'$) 
if $\a \ne 0$ (resp. $\a = 0$).  Thus if we define $W, W'$ 
the corresponding subspaces of $V$ for $\a \ne 0$, we see that 
$E = (W \subset W') \in (\SF_{u,\la'}^{\nu})^F$, while if $W, W'$
are defined from $W_0$ with $\a = 0$, then 
$E' = (W \subset W') \in (\SF^{\nu'}_{u,\la'})^F$.   
Hence the assertion can be proved similarly. The lemma is proved. 
\end{proof}
 
\para{4.9.}
Assume that $(\la, \la')$ is in the case (I) or (II) of 4.5.
We choose a split element $u \in C_{\la}^F$.  Then  by 
Lemma 4.8, there exists a split element $u' \in C'^F$ and 
$y \in Y_{u,u'}^F$.  By Lemma 4.6, $Z_G(u)$ acts transitively on 
$\CQ_{u,C'}$.  Thus $I_0 = Z^0_G(u)yZ_M^0(u')U_Q$ is an $F$-stable 
irreducible component of $Y_{u,u'}$, and all other irreducible components
of $Y_{u,u'}$ are obtained by the action of $A_G(u) \times A_M(u')$. 
Let $A_0'$ be the stabilizer of $I_0$ in $A_G(u) \times A_M(u')$, and 
$A_0$ be the quotient group $(A_G(u) \times A_M(u'))/A'_0$ (note that 
$A_G(u) \times A_M(u')$ is abelian).   Then $X_{u,u'}$ is in bijection with 
$A_0$, and $\ve_{u,u'}$ is isomorphic to $\Ql[A_0]$ as $A_0$-modules. 
Note that $F$ acts naturally on $A_0$, and the isomorphism 
$\ve_{u,u'} \simeq \Ql[A_0]$ is compatible with $F$-actions.
Let $\r_1$ be the irreducible character of $A_0$ corresponding to
$\r \otimes \r'^*$.  We assume that $\r$ and $\r'$ are $F$-stable.
Then $\r_1$ is $F$-stable. Now the subspace $\r \otimes \r'^*$ of $\ve_{u,u'}$ 
coincides with the subspace of $\Ql[A_0]$ on which $A_0$ acts via $\r_1$. 
Hence this is a one-dimensional subspace spanned by 
\begin{equation*}
z_{\r_1} = \sum_{a \in A_0}\r_1(a)\iv a \in \Ql[A_0].
\end{equation*}
Since $\r_1$ is $F$-stable, we see that $F$ acts trivially on 
$\r \otimes \r'^* \subset \ve_{u,u'}$.  

\para{4.10.}
Next assume that $(\la, \la')$ is in the case (III).
We follow the notation in the proof of Lemma 4.8. 
By Lemma 4.6 (ii), we have a partition 
$\SF_{u,\la'} = \SF_{u,\la'}^{\nu} \sqcup \SF_{u,\la'}^{\nu'}$. 
For $E = (W \subset W') \in \SF_{u, \la'}$, we have $\dim (W \cap \Ker x)= 1$,
and $W \cap \Ker x \subset \lp e_1, e_1'\rp$.  
Thus we have a map $\SF_{u,\la'} \to \BP(\lp e_1, e_1'\rp),(W \subset W') \mapsto W \cap \Ker x$.  
Then $W \cap \Ker x = \lp v \rp$, where $v = \a e_1 + e_1'$
for some $\a \in \BF_{q^2}$ (note that $E \notin \SF_{u,\la'}$ if $v = e_1$).  
Here $E \in \SF^{\nu}_{u,\la'}$ if $\a \ne 0$, and 
$E \in \SF^{\nu'}_{u,\la'}$ if $\a = 0$. It follows that $\SF^{\nu}_{u,\la'}$ is 
open in $\SF_{u,\la'}$.  
Let $\CQ^{\nu}_{u,C'}$ (resp. $\CQ^{\nu'}_{u,C'}$) be the variety corresponding 
to $\SF^{\nu}_{u,\la'}$ (resp. $\SF^{\nu'}_{u,\la'}$),  
and put $Y_{u,u'}^{\nu} = q\iv(\CQ_{u,C'}^{\nu})$ 
(resp. $Y_{u,u'}^{\nu'} = q\iv(\CQ_{u,C'}^{\nu'})$), where $q : Y_{u,u'} \to \CQ_{u,C'}$ 
is the map defined as in 3.10. We have 
$Y_{u,u'} = Y_{u,u'}^{\nu} \sqcup Y_{u,u'}^{\nu'}$, and $Y_{u,u'}^{\nu}$ is open 
in $Y_{u,u'}$.  Hence $\dim Y_{u,u'}^{\nu} = d_{u,u'}$.   
$Y_{u,u'}^{\nu}$ is stable under the action of $Z_G(u) \times Z_M(u')U_Q$.  
If we put $X_{u,u'}^{\nu}$ the set of irreducible components of dimension $d_{u,u'}$, 
then $X_{u,u'}^{\nu}$ is an $A_G(u) \times A_M(u')$-stable (non-empty) subset of $X_{u,u'}$. 
Thus we have a subrepresentation $\ve_{u,u'}^{\nu}$ of $\ve_{u,u'}$. 
We consider the representation $\r \otimes \r'^*$ appearing in $\ve_{u,u'}$.
By 4.5 (III), the multiplicity of $\r\otimes \r'^*$ in $\ve_{u,u'}$ is
equal to 2. Since $A_G(u) \times A_M(u')$ acts transitively on the set $X_{u,u'}^{\nu}$, 
a similar argument as in 4.9 shows that $\r \otimes \r'^*$ appears 
in $\ve_{u,u'}^{\nu}$ with multiplicity one.  In particular, 
$X_{u,u'}^{\nu} \ne X_{u,u'}$ and we see that $\dim Y_{u,u'}^{\nu'} = d_{u,u'}$.
If we define $X_{u,u'}^{\nu'}$ as the set of irreducible components of dimension $d_{u,u'}$, 
we have $X_{u,u'} = X_{u,u'}^{\nu} \sqcup X_{u,u'}^{\nu'}$. 
Thus $\ve_{u,u'}$ is decomposed as 
$\ve_{u,u'} = \ve_{u,u'}^{\nu}\oplus \ve_{u,u'}^{\nu'}$, where $\r\otimes \r'^*$ 
appears with multiplicity one in either of them. 
By applying the argument in 4.9, 
we see that $F$ acts trivially on $\r \otimes \r'^* \subset \ve_{u,u'}^{\nu}$, 
and on $\r \otimes \r'^* \subset \ve_{u,u'}^{\nu'}$. 
Hence $F$ acts trivially on the (two-dimensional) 
isotypic subspace of $\r\otimes \r'^*$ in $\ve_{u,u'}$.    
\par
Summing up the discussion in 4.9 and 4.10, we have the following.

\begin{prop}   
Let $(\la, \la')$ be as in 4.5.  For a split element $u \in C_{\la}^F$, 
there exists a split element $u' \in C_{\la'}^F$ such that 
$F$ acts trivially on the isotypic subspace of $\r\otimes \r'^*$ in $\ve_{u,u'}$. 
\end{prop}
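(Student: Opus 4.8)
The plan is to assemble Proposition 4.11 directly from the two lemmas just proved, treating the cases (I)/(II) and (III) in parallel as was done in the preceding paragraphs 4.9 and 4.10. The statement is essentially a bookkeeping consequence: Lemma 4.6 controls the $Z_G(u)$-orbit structure on $\CQ_{u,C'}$ (one orbit in cases (I), (II); two orbits, one per Jordan-type stratum, in case (III)), and Lemma 4.8 supplies, for a split $u$, a split $u'$ together with an $F$-fixed point $y\in Y_{u,u'}^F$ lying over each orbit. So the proof is simply: invoke these two lemmas, then run the group-algebra argument of 4.9.

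First I would fix a split element $u\in C_\la^F$ and let $u'\in C_{\la'}^F$ be the split element produced by Lemma 4.8 (part (i) in cases (I), (II); part (ii) in case (III) with $j=i+1$, which by the discussion at the end of 4.5 is the only subcase of (III) that need be considered). In cases (I) and (II): by Lemma 4.6(i), $Z_G(u)$ acts transitively on $\CQ_{u,C'}\simeq\SF_{u,\la'}$, and by Lemma 4.8(i) there is $y\in Y_{u,u'}^F$ with $yQ$ corresponding to some $E\in\SF_{u,\la'}^F$. Hence $I_0=Z_G^0(u)\,y\,Z_M^0(u')U_Q$ is an $F$-stable irreducible component of $Y_{u,u'}$, every irreducible component in $X_{u,u'}$ is an $(A_G(u)\times A_M(u'))$-translate of $I_0$, and so $\ve_{u,u'}\simeq\Ql[A_0]$ as $F$-equivariant $A_0$-modules, where $A_0=(A_G(u)\times A_M(u'))/A_0'$ and $A_0'$ is the stabilizer of $I_0$. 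Since $\r,\r'$ are $F$-stable, the character $\r_1$ of $A_0$ corresponding to $\r\otimes\r'^*$ is $F$-stable, so the line $\r\otimes\r'^*\subset\Ql[A_0]$ spanned by $z_{\r_1}=\sum_{a\in A_0}\r_1(a)^{-1}a$ is $F$-stable, and $F$ acts on it trivially because $\r_1(F(a))=\r_1(a)$ forces $F(z_{\r_1})=z_{\r_1}$.

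For case (III) with $j=i+1$, I would use the stratification $\SF_{u,\la'}=\SF_{u,\la'}^\nu\sqcup\SF_{u,\la'}^{\nu'}$ of Lemma 4.6(ii) (with $\SF_{u,\la'}^\nu$ open, as noted in 4.10), pull it back to $Y_{u,u'}=Y_{u,u'}^\nu\sqcup Y_{u,u'}^{\nu'}$, and observe that both strata have dimension $d_{u,u'}$ (the open one automatically, the closed one because $\r\otimes\r'^*$ occurs with multiplicity $2$ in $\ve_{u,u'}$ by (4.5.1) but only with multiplicity $1$ in $\ve_{u,u'}^\nu$ by the transitive $A_G(u)\times A_M(u')$-action on $X_{u,u'}^\nu$, forcing $X_{u,u'}^\nu\ne X_{u,u'}$). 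Lemma 4.8(ii) gives $F$-fixed flags $E\in(\SF_{u,\la'}^\nu)^F$ and $E'\in(\SF_{u,\la'}^{\nu'})^F$ with representatives $y,y'\in Y_{u,u'}^F$. Applying the argument of 4.9 separately to $\ve_{u,u'}^\nu$ and to $\ve_{u,u'}^{\nu'}$ shows $F$ acts trivially on $\r\otimes\r'^*$ inside each summand, hence on the full two-dimensional isotypic subspace $\r\otimes\r'^*\subset\ve_{u,u'}=\ve_{u,u'}^\nu\oplus\ve_{u,u'}^{\nu'}$.

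The one genuinely substantive point — and the step I would be most careful about — is the $F$-equivariance of the isomorphism $\ve_{u,u'}\simeq\Ql[A_0]$ (and its analogue on each stratum): one must check that the identification of $X_{u,u'}$ with the coset space $A_0$ carries the geometric $F$-action on irreducible components to the $F$-action on $A_0$ induced from the diagonal $F$ on $A_G(u)\times A_M(u')$, which rests precisely on having chosen $y\in Y_{u,u'}^F$ so that the base component $I_0$ is $F$-stable. Everything else is a direct transcription of 4.9 and 4.10, so no new calculation is needed; the proposition then follows, and I would close the proof by simply citing 4.9 for cases (I), (II) and 4.10 for case (III).
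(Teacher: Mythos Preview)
Your proposal is correct and follows the paper's own argument essentially verbatim: the paper's proof of Proposition~4.11 is precisely the discussion in paragraphs~4.9 and~4.10, which you have reproduced faithfully, including the group-algebra argument via $\Ql[A_0]$, the stratification $\ve_{u,u'}=\ve_{u,u'}^{\nu}\oplus\ve_{u,u'}^{\nu'}$ in case~(III), and the multiplicity-counting via~(4.5.1). Your caution about the $F$-equivariance of $\ve_{u,u'}\simeq\Ql[A_0]$ resting on the $F$-stability of the base component $I_0$ is well placed and is exactly the point where Lemma~4.8 is used.
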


\para{4.12.}
For each $(C, \SE) \in \SN_G^F$, we choose a split element 
$u = u^{\bullet}\in C^F$.  Let $\r \in A_G(u)\wg$ be an $F$-stable irreducible 
representation of $A_G(u)$ corresponding to $\SE$. Note that
$A_G(u)$ is abelian, and $\r$ is one-dimensional. 
As a split extension $\wt\r$ of $\r$, we choose a trivial extension such that
$\tau$ acts trivially on the representation space of $\r$. 
\par
Let $\x \in Z\wg$ be an $F$-stable irreducible character of order $d$. 
We consider the correspondence $\SN_{\x} \simeq S_{n/d}\wg$ as in 
(4.3.1) associated to $(L, C_0, \SE_0) \in \SM_{\x}$. 
We are now ready to prove Theorem 1.5 for $SL_n$.
Note that split elements are defined in 4.7.

\begin{thm}  
Assume that $G = SL_n$, with $F$: non-split type. 
For each $(C, \SE) \in \SN_{\x}^F$, let $u^{\bullet} \in C^F$ be a split element, 
and assume that $(C,\SE) \lra (u^{\bullet},\r)$. 
For each $(L, C_0, \SE_0) \in \SM_G^F$, let $u_0^{\bullet} \in C_0^F$
be a split element as above (applied for $L$), and assume that 
$(C_0,\SE_0) \lra (u_0^{\bullet}, \r_0)$.  
Then for each $(C,\SE)$ belonging to the series $(L,C_0,\SE_0)$, 
we have $\g(u_0^{\bullet}, \wt\r_0, u^{\bullet}, \wt\r) = 1$, where 
$\wt\r_0 \in \wt A_L(u_0^{\bullet})\wg$ (resp. $\wt\r \in \wt A_G(u^{\bullet})\wg$)
is the split extension of $\r_0$ (resp. $\r$).
\end{thm}   

\begin{proof}
Take $(u, \r) \lra (C, \SE) \in (\SN_G)_{\x}^F$ with $C = C_{\la}$.
We consider an $F$-stable Levi subgroup $M$ of $G$ as in 4.4. 
Then there exists $\la' \in \SP_{n-2d}$ such that $(\la, \la')$ 
is in the case (I) $\sim$ (III) in 4.5.
Take $(u',\r') \lra (C',\SE') \in (\SN_M)_{\x}^F$. 
By the restriction formula, $\r \otimes \r'^*$ appears in 
the representation $\ve_{u,u'}$.  We assume that 
$u \in C^F$ is a split element. Then by Proposition 4.11 
there exists a split element $u' \in C'^F$ such that $F$ acts trivially 
on the isotypic subspace of $\r \otimes \r'^*$ in $\ve_{u,u'}$.  
Then by (1.8.1), $\s_{\r,\r'}: M_{\r,\r'} \to M_{\r,\r'}$ is a scalar 
multiplication by $q^{-(\dim C - \dim C')/2 + \dim U_Q}$.  
By induction on $n$, we 
may assume that $q^{-(a_0'+ r')/2}\s_{(u',\r')}$ makes the irreducible 
$S_{n/d -2}$ module $V_{(u',\r')}$ to
the preferred extension to $\wt S_{n/d - 2}$.  
Then $q^{-(a_0 + r)/2}\s_{(u,\r)}$ makes the irreducible $S_{n/d}$-module 
$V_{(u,\r)}$ to the preferred extension to $\wt S_{n/d}$.   
This implies, by (1.8.2), that $\g(u,\wt\r, u_0, \wt\r_0) = 1$. 
It remains to consider the case where $n = d$, 
$(u,\r) \lra (C_{\la}, \SE) \in \SN_{\x}$.
But in this case $\la = (d)$ and $u$ is regular unipotent,   
$\r = \x \in A_G(u)\wg = Z\wg$.  Then $(C, \SE)$ is a cuspidal pair, 
and we have $\g = 1$ automatically. The theorem is proved. 
\end{proof}

\remark{4.14.}
In the proof of Theorem 4.13, we have assumed that 
$F$ is of non-split type.  But the argument there works well for the
split case, under a suitable modification, and  
it gives a proof of Theorem 1.5 for the split case.  Of course, 
in the split case, one can use $M \simeq S(GL_d \times GL_{n-d})$, 
which makes the argument simpler.  In any case, if we use the restriction 
formula (1.8.1), the discussion  turns out to be much simpler 
compared to that used in [S3, I].

\par\bigskip

\par\vspace{1.0cm}
\noindent
T. Shoji \\
School of Mathematical Sciences, Tongji University \\ 
1239 Siping Road, Shanghai 200092, P.R. China  \\
E-mail: \verb|shoji@tongji.edu.cn|

\end{document}